\documentclass[review]{elsarticle}
\usepackage{lineno,hyperref}
\usepackage{amsmath, amssymb}
\usepackage{amsmath}
\usepackage{xcolor}
\usepackage{mathtools}    
\usepackage{cases}        
\usepackage{amssymb}
\usepackage{mathdots}
\usepackage{geometry}
\usepackage{amsthm}
\usepackage{subfig}
\usepackage{graphicx}
\usepackage{booktabs}
\usepackage{caption}
\usepackage{placeins}
\newcommand{\R}{\mathbb{R}}
\newcommand{\C}{\mathbb{C}}

\newcommand{\rk}{\operatorname{rank}}

\newcommand{\glt}{\mathrm{GLT}}
\DeclarePairedDelimiter{\norm}{\lVert}{\rVert}
\DeclarePairedDelimiter{\abs}{\lvert}{\rvert}

\theoremstyle{plain}
\newtheorem{gltax}{GLT Axiom}
\newtheorem{theorem}{Theorem}[section]

\newtheorem{corollary}[theorem]{Corollary}

\newtheorem{Remark}[theorem]{Remark}
\newtheorem{Definition}[theorem]{Definition}
\modulolinenumbers[5]

\journal{Journal of \LaTeX\ Templates}

\begin{document}

\begin{frontmatter}

\title{Efficient Krylov solvers for inverse source problem in 2D space-time fractional diffusion equation}
\author[label1]{Asim Ilyas$^*$}\ead{ailyas1@uninsubria.it}
\author[label1,label4]{Stefano Serra-Capizzano}\ead{s.serracapizzano@uninsubria.it}
\address[label1]{Department of Science and High Technology, University of Insubria, Como, Italy}
\address[label4]{Department of Information Technology, Uppsala University, Uppsala, Sweden}
\begin{abstract}
In this work, we consider a two-dimensional time-space fractional diffusion equation with a variable coefficient and investigate the inverse source problem of reconstructing the source term \( f(x,y) \), after regularizing the problem using the quasi-boundary value method to mitigate ill-posedness. A finite difference discretization results in a large-scale linear system with a multilevel Toeplitz-like block structure. We perform a spectral analysis of the associated matrix sequences, employing tools from Generalized Locally Toeplitz (GLT) theory, and construct efficient preconditioners based on the GLT analysis. The proposed preconditioners preserve the multilevel structure of the discretization matrices and leads to a general eigenvalue clustering around one for the preconditioned sequence. Numerical experiments validate the theoretical findings and demonstrate that the proposed approach significantly accelerates the convergence of the GMRES method in reconstructing the source term in the two-dimensional space-time fractional diffusion equation.
\end{abstract}
\begin{keyword}
\texttt{Fractional derivatives, Inverse problem, Quasi-boundary value method, GMRES, Generalized Locally Toeplitz, Preconditioning}
\end{keyword}

\end{frontmatter}

\section{Introduction}
In this article, we are concerned with the design of a fast solver for the numerical approximation of the inverse source problem of identifying the source term $f(x,y)$ from the space-time fractional diffusion equation  	
\begin{equation}\label{eq:problem}
	\begin{dcases} {}^{T}D_{t}^{\xi,\rho}u(x,y,t)+ a(x,y)\left({}^{C}D_{x}^{\eta_{1}}+{}^{C}D_{y}^{\eta_{2}}\right)u(x,y,t)=q(t)f(x,y), & (x,y,t) \in \Omega_{T},\\	u(0,y,t)=0=u(1,y,t), & y \in (0,1), \quad t\in(0,T),\\
		u(x,0,t)=0=u(x,1,t), & x \in (0,1), \quad t\in(0,T),\\
		u(x,y,0) = \varphi(x,y), & x \in (0,1), \quad y \in (0,1), \\
		u(x,y,T)=\phi(x,y),& x\in(0,1), \quad y\in(0,1).\end{dcases}\end{equation}
In the previous set of equations, the term ${}^{T}D_{t}^{\xi,\rho}$ represents the tempered fractional derivative of order $\xi\in(0,1)$ and $\rho>0$ is the truncation parameter, $a(x,y)$ is a continuous variable coefficient such that $a(x,y)\neq 0$, ${}^{C}D_{x}^{\eta_{1}}$\; and $\;{}^{C}D_{y}^{\eta_{2}}$ denote the Caputo fractional derivatives of orders $\eta_{1}, \eta_{2}\in(0,1)$, $q(t)$ is a given function, $\varphi(x,y)$ is the initial condition, and $\phi(x,y)$ is the final time datum and $\Omega_{T}:=(0,1)\times(0,1)\times (0,T)$, respectively.
The tempered fractional derivative in time variable of order $\xi\in(0,1)$ is defined as follows
$${}^{T}D_{t}^{\xi,\rho}u(x,y,t):=\frac{1}{\Gamma(1-{\xi})}\int_{0}^{t}e^{-\rho(t-\tau)}(t-\tau)^{-{\xi}}\frac{\partial}{\partial \tau}u(x,y,\tau)d\tau.$$
The Caputo fractional derivative in the space variable of orders $\eta\in(0,1)$ is defined as follows
\[{}^{C}D^{\eta}_{x}u(x,y,t)=\frac{1}{\Gamma(1-\eta)}\int\limits^{x}_{0}{(x-\zeta)^{-\eta}}\frac{\partial}{\partial \zeta}{u(x,y,\zeta)}\;d\zeta,\qquad x >0,\]

The motivation for this work stems from the increasing importance of space-time fractional diffusion equations (FDEs), which offer a versatile framework for modeling complex, multiscale phenomena in a rich variety of scientific disciplines. By incorporating fractional derivatives, FDEs can capture essential features such as memory effects, non-local interactions, and anomalous diffusion behaviors. These characteristics make FDEs ideal for modeling processes that involve complex transport phenomena, including anomalous diffusion, subdiffusion, and superdiffusion in heterogeneous media. Notable applications include fields such as viscoelasticity \cite{Zhou2022}, anomalous diffusion \cite{Kilbas2006,Metzler2014}, biological systems \cite{Ghafoor,Owolabi2024}, chemistry \cite{Metzler2025}, finance \cite{Muhammad2024}, and chaotic systems \cite{Gomez-Aguilar}. In particular, the Caputo fractional derivative is widely used for modeling spatial heterogeneities, as it incorporates physically meaningful initial conditions and avoids singularities at the boundaries. Meanwhile, tempered fractional derivatives are effective in describing transient anomalous diffusion, where the diffusion initially follows a fractional power law and later transitions to normal diffusion \cite{Sandev2017}. Equation \eqref{eq:problem} incorporates both tempered memory effects in time and fractional diffusion in two spatial dimensions, making it well-suited for modeling complex transport phenomena such as anomalous dispersion in heterogeneous media.

Inverse problems are ubiquitous in practical applications, where the goal is to identify unknown causes from observed effects. However, these problems are often ill-posed in the sense of Hadamard, meaning that small perturbations in the input data can lead to significant errors in the solution. As a result, various regularization methods have been developed to transform these ill-posed problems into well-posed ones. Specifically, inverse problems in the context of fractional differential equations (FDEs) have been the subject of extensive analytical and computational research \cite{Ilyas2024, Jin2015, Tuan2017, Malik2024, Rooh2024, Suhaib2023}. These inverse source problems arise in fields such as environmental modeling, medical imaging, and materials science, where accurate identification of underlying source terms is essential for predictive modeling and system optimization. In this work, we address the ill-posedness of the inverse source problem \eqref{eq:problem} in the context of a two-dimensional space-time FDE by applying the quasi-boundary value method, a nonlocal regularization technique. This approach has been successfully utilized to handle similar ill-posed problems in both parabolic and fractional diffusion equations \cite{Denche2005, Hao2008}. We also recognize that the measurement of the final time datum is inevitably affected by noise. Let us denote the noisy datum by $\phi_\varepsilon(x,y)$, where $\varepsilon>0$ represents the noise level. Assuming the noise is sufficiently small, we impose the condition:
\begin{equation*}
	\norm{\phi - \phi_\varepsilon}_{L^2} \leq \varepsilon,
\end{equation*}
where $\norm{\cdot}_{L^2}$ is the $L^2$ norm on $(0,1)\times(0,1)$. When we apply the quasi-boundary value method, we obtain a class of parameter dependent well-posed regularized problems
\begin{equation}\label{eq:quasi-problem}
	\begin{dcases} {}^{T}D_{t}^{\xi,\rho}u(x,y,t)+ a(x,y)\left({}^{C}D_{x}^{\eta_{1}}+{}^{C}D_{y}^{\eta_{2}}\right)u(x,y,t)=q(t)f_{\varepsilon,\lambda}(x,y), & (x,y,t) \in \Omega_{T},\\	u(0,y,t)=0=u(1,y,t), & y \in (0,1), \quad t\in(0,T),\\
		u(x,0,t)=0=u(x,1,t), & x \in (0,1), \quad t\in(0,T),\\
		u(x,y,0) = \varphi(x,y), & x \in (0,1), \quad y \in (0,1), \\
		u(x,y,T)=\phi_{\varepsilon}(x,y) - \lambda\, a(x,y) \left({}^{C}D_{x}^{\eta_{1}}+{}^{C}D_{y}^{\eta_{2}}\right) f_{\varepsilon,\lambda}(x,y),& x\in(0,1), \quad y\in(0,1),\end{dcases}
\end{equation}
which approximate the original FDE in \eqref{eq:problem}.

Here, $\lambda>0$ is a regularization parameter, $f_{\varepsilon,\lambda}$ represents the regularized approximation of the source term $f(x,y)$ and the solution $u(x,y,t)\equiv u_{\varepsilon,\lambda}(x,y,t)$ provides an approximation to the original solution of problem \eqref{eq:problem}. The regularization term $\lambda\, a(x,y) \left({}^{C}D_{x}^{\eta_{1}}+{}^{C}D_{y}^{\eta_{2}}\right)$ ensures that the reconstructed source term is stable and well-behaved, even in the presence of noise.

In this paper, we focus on the development of a fast algorithm to reconstruct the source term \( f(x, y) \) in the context of a two-dimensional space-time FDE, starting from the regularized formulation \eqref{eq:quasi-problem}, and generalizing the work in \cite{Ilyas2026} devoted to unidimensional space version of \eqref{eq:problem}-\eqref{eq:quasi-problem}. Numerous efficient numerical approaches have been devised for solving FDEs, including finite difference schemes \cite{Chen2014, Hao2021, Thomas2013}, finite element methods \cite{Bu2014, Dhatt2012, Li2018}, finite volume techniques \cite{Eymard2000, Simmons2017}, and spectral methods \cite{Gottlieb1977}. Despite their effectiveness, achieving high accuracy typically necessitates very fine discretizations in both space and time, which can lead to large, intricate systems of equations that are computationally expensive. This is further complicated by the nonlocal behavior inherent in fractional differential operators, resulting in dense matrices. Nevertheless, discretizing these equations on a uniform grid often results in coefficient matrices with a Toeplitz-like structure, which allows for the use of faster and more efficient solvers, such as preconditioned iterative methods, to reduce computational time \cite{Donatelli2016, Lei2013, Pang2021}.

The extension of these methods to a two-dimensional problem introduces additional challenges due to the increased dimensionality of the system. However, discretizing the two-dimensional problem using finite differences leads to a multilevel block Toeplitz structure, enabling the use of advanced solvers. Beyond preconditioned iterative methods, direct solvers that leverage matrix structures have proven effective in improving computational efficiency. For example, divide-and-conquer techniques have been successfully applied to time-dependent FDEs with spatially varying diffusion coefficients, significantly accelerating the solution process \cite{Jin2023}. These numerical advancements, combined with matrix structure exploitation, enable the efficient solution of large-scale two-dimensional FDEs, ensuring both speed and accuracy in real-world applications.

To solve this problem numerically, our proposal is to apply a finite difference scheme, leading to a two-by-two block linear system, and then to design symbol-based approximations \cite{Ilyas2026,Rosita-SIMAX} to the resulting coefficient matrices. More precisely, our primary focus is on the design of an effective preconditioner for the Generalized Minimal RESidual (GMRES) method \cite{Saad1986GMRES}. For the preconditioner to be efficient, it must approximate the original matrix in a spectral sense. We begin by conducting an asymptotic spectral analysis of the coefficient matrix sequences. Due to the presence of the variable coefficient \( a(x, y) \), which disrupts the Toeplitz structure, we rely on GLT matrix theory to address this challenge. GLT theory plays a crucial role in managing the complexity of the variable coefficient in two-dimensional problems, as discussed in \cite{garoni2017, garoni2018, barbarino2020uni, barbarino2020multi}. For a comprehensive guide to applying GLT theory in numerical problems, refer to \cite{tom, glt-tutorial}. The spectral analysis provides the foundation for developing an effective preconditioning strategy, ensuring the preconditioned matrix sequence is an accurate approximation of the original coefficient matrix.

The structure of the paper is as follows. Section \ref{sec:prelim} presents the theoretical framework and an overview of the key tools used in the analysis. Section \ref{sec:discre} outlines the discretization of the regularized problem using a finite difference scheme for both space and time, and discusses the structural properties of the resulting linear system. In Section \ref{sec:glt-analysis}, we analyze the coefficient matrix sequence using GLT theory. Section \ref{sec:precond} extends this analysis to the preconditioned matrix sequences. In Section \ref{sec:num}, we conduct numerical experiments to evaluate the performance of the proposed preconditioner and we report a detailed analysis of the computational costs. Finally, Section \ref{sec:end} provides concluding remarks.

\section{Preliminaries} \label{sec:prelim}

We briefly introduce the fundamental tools employed in this work, which are discussed in detail in \cite{garoni2017, garoni2018, barbarino2020uni, barbarino2020multi}. Here, we present the definitions and theorems directly in the multilevel, multi-dimensional setting, which is required for the analysis of the current paper.


Let us first establish some useful notation.
\begin{itemize}
	\item Given a square matrix $A_n$, we denote with $\lambda_j(A_n)$ and $\sigma_j(A_n)$ the $j$-th eigenvalue and singular value of $A_n$, respectively, as $j=1,\ldots,n$.
	\item The spectral norm for a square matrix $A_n$ is denoted as $\norm{A_n}$.
	\item Whenever we use terminology from measure theory, for instance “measurable set”, “measurable function”, “a.e.”, we always refer to the Lebesgue measure in $\R^t$, denoted with $\mu_t$.
\end{itemize}

\noindent\textbf{Multi-index notation}. To effectively deal with multilevel structures it is necessary to use multi-indices, which are vectors of the form $\textbf{i} = (i_1, \cdots, i_d) \in \mathbb{Z}^d$. The related notation is listed below.
\begin{itemize}
	\item $\mathbf{0}, \mathbf{1}, \mathbf{2}, \dots$ are vectors of all zeroes, ones, twos, etc.
	\item $\textbf{h} \leq \textbf{k}$ means that $h_r \leq k_r$ for all $r = 1, \cdots, d$. In general, relations between multi-indices are evaluated componentwise.
	\item Operations between multi-indices, such as addition, subtraction, multiplication, and division, are also performed componentwise.
	\item The multi-index interval $[\textbf{h}, \textbf{k}]$ is the set $\{\textbf{j} \in \mathbb{Z}^d : \textbf{h} \leq \textbf{j} \leq \textbf{k}\}$. We always assume that the elements in an interval $[\textbf{h}, \textbf{k}]$ are ordered in the standard lexicographic manner
	\[
	\Biggr[ \cdots
	\biggr[\Bigr[\bigl(j_1, \cdots, j_d\bigl)\Bigr]
	_{j_d = h_d, \cdots, k_d}\biggr]
	_{j_{d-1} = h_{d-1}, \cdots, k_{d-1}}
	\cdots \Biggr]_{j_1 = h_1, \cdots, k_1}.
	\]
	\item $\textbf{j} = \textbf{h}, \cdots, \textbf{k}$ means that $\textbf{j}$ varies from $\textbf{h}$ to $\textbf{k}$, always following the lexicographic ordering.
	\item $\textbf{m} \to \infty$ means that $\min(\textbf{m}) = \min_{j=1, \cdots, d} m_j \to \infty$.
	\item The product of all the components of $\textbf{m}$ is denoted as $\nu(\textbf{m}) := \prod_{j=1}^{d} m_j$.
\end{itemize}

A \textbf{multilevel matrix sequence} is a matrix sequence $\{A_{\textbf{n}} \}_n$ such that $n$ varies in some infinite subset of $\mathbb{N}$, $\textbf{n} = \textbf{n}(n)$ is a multi-index in $\mathbb{N}^d$ depending on $n$, and $\textbf{n} \to \infty$ when $n \to \infty$. This is typical of many approximations of differential operators in $d$ dimensions.

\subsection{Spectral tools}

Throughout this paper, we work with matrix sequences, which refer to any sequence of the form \( \{A_n\}_n \), where each \( A_n \) is a square matrix of size \( d_n \), with \( d_n \to \infty \) monotonically as \( n \to \infty \). For a given matrix sequence, it is often possible to associate a spectral or singular value distribution, as defined below.
\begin{Definition} \label{def:distrib}
	Let $\{A_n\}_n$ be a matrix sequence, where $A_n$ has size $d_n\times d_n$, and let $\psi:D\subset\R^t \to\C^{r\times r}$ be a measurable function defined on a set $D$ with $0 < \mu_t(D) < \infty$. Denote with $C_c(\mathbb{K})$ the set of continuous complex-valued functions with bounded support on $\mathbb{K}\in\{\R,\C\}$.
	\begin{itemize}
		\item $\{A_n\}_n$ has an \emph{(asymptotic) singular value distribution} described by $\psi$ if for any $F\in C_c(\R)$
		    \begin{equation*}
			\lim_{n \to \infty} \frac{1}{d_n} \sum_{i=1}^{d_n} F(\sigma_i(A_n)) = \frac{1}{\mu_t(D)} \displaystyle \int_D \displaystyle\frac{\sum_{i=1}^{r} F(\sigma_i(\psi(\mathbf{x})))}{r} d\mathbf{x},
		\end{equation*}
		where $\sigma_i(A_n)$, $i=1,\ldots,d_n$, are the singular values of $A_n$. We use the notation $\{A_n\}_n \sim_\sigma \psi$.
		\item $\{A_n\}_n$ has an \emph{(asymptotic) spectral} or \emph{eigenvalue distribution} described by $\psi$ if for any $F\in C_c(\C)$
		 \begin{equation*}
			\lim_{n \to \infty} \frac{1}{d_n} \sum_{i=1}^{d_n} F(\lambda_i(A_n)) = \frac{1}{\mu_t(D)} \int_D \displaystyle\frac{\sum_{i=1}^{r} F(\lambda_i(\psi(\mathbf{x})))}{r} d\mathbf{x},
		\end{equation*}
		where $\lambda_i(A_n)$, $i=1,\ldots,d_n$, are {the eigenvalues of $A_n$}. We use the notation $\{A_n\}_n \sim_\lambda \psi$.
	\end{itemize}
\end{Definition}

The informal meaning behind Definition \ref{def:distrib} is the following: if $\psi$ is continuous and $n$ is large, then the eigenvalues or singular values of $A_n$ (suitably ordered and possibly except for $o(d_n)$ outliers) are approximated by a uniform sampling of $\psi$ over its domain.

The notion of clustering can be seen a special case of distribution. In what follows, we denote the $\varepsilon$-expansion of a set $S\subseteq\C$ as
\begin{equation*}
	B(S,\varepsilon) := \bigcup_{z\in S} B(z,\varepsilon),
\end{equation*}
where $B(z,\varepsilon) := \{w\in\C : \abs{w-z} < \varepsilon\}$ is the complex disk with center $z$ and radius $\varepsilon > 0$.

\begin{Definition} \label{clustering}
	Let $\{A_n\}_n$ be a matrix sequence, with $A_n$ of size $d_n\times d_n$, and let $S\subseteq\C$ be nonempty and closed. The sequence $\{A_n\}_n$ is \emph{strongly clustered} at $S$ in the sense of the eigenvalues if $\forall\varepsilon >0$
	\begin{equation*}
		\# \{ j\in\{1,\ldots,d_n\} : \lambda_j(A_n) \notin B(S, \varepsilon) \} = O(1),
		\qquad n\to\infty.
	\end{equation*}
	In other words, the number of eigenvalues of $A_n$ outside $B(S,\varepsilon)$ is bounded by a constant independent of $n$. Moreover, $\{A_n\}_n$ is \textit{weakly clustered} at $S$ if $\forall\varepsilon >0$
	\begin{equation*}
		\# \{ j\in\{1,\ldots,d_n\} : \lambda_j(A_n) \notin B(S, \varepsilon) \} = o(d_n),
		\qquad n\to\infty,
	\end{equation*}
	meaning that the number of eigenvalues of $A_n$ outside $B(S,\varepsilon)$ is negligible with respect to the size of the matrix. A corresponding definition can be given for the singular values, with $S\subseteq\R^+$.
\end{Definition}

\begin{Remark} \label{remark:distrib-cluster}
	Let us clarify the relationship between the concepts of distribution and clustering. We recall that the essential range of a measurable function $g:D\subseteq\R^t\to\C$ is the set
	\begin{equation*}
		\{ z\in\C : \mu_t \big(\{ g \in B(z, \varepsilon) \} \big)> 0 \quad\forall\varepsilon > 0 \}.
	\end{equation*}
	As reported in \cite[Theorem 4.2]{Golinskii2007}, it holds
	\begin{equation*}
		\{A_n\}_n \sim_\lambda \psi
		\qquad\Longrightarrow\qquad
		\{A_n\}_n \text{ is weakly clustered at the essential range of } \psi.
	\end{equation*}
	Furthermore, if the essential range of $\psi$ consists of a single number $s\in\C$, then
	\begin{equation*}
		\{A_n\}_n \sim_\lambda \psi
		\qquad\iff\qquad
		\{A_n\}_n \text{ is weakly clustered at } s \text{ in the sense of the eigenvalues}.
	\end{equation*}
	The latter case is typically of interest in the context of preconditioning. Corresponding statements can be given for the singular values, with the obvious suitable changes.
\end{Remark}

\subsection{The multilevel block GLT \texorpdfstring{$\ast$}{}-algebra}

A multilevel block GLT sequence is a matrix sequence belonging to the $\ast$-algebra generated by the three specific classes of matrix sequences: zero-distributed, multilevel block Toeplitz and multilevel block diagonal sampling matrix sequences. We define them in the following paragraphs and they can be seen as the basic building blocks of the GLT $\ast$-algebra.

Let \( r \geq 1 \) be a fixed integer. A multilevel \( r \)-block GLT sequence, or simply a GLT sequence when \( r \) is not specified, is a special multilevel \( r \)-block matrix sequence that is associated with a measurable function \( \kappa : [0,1]^d \times [-\pi, \pi]^d \to \mathbb{C}^{r \times r} \), where \( d \geq 1 \), and is called the GLT \emph{symbol}. When $d=1$, we are in the context of unilevel GLT sequences, while $d>1$ relates to the multilevel case. The symbol is essentially unique, in the sense that if $\kappa, \varsigma$ are two symbols of the same  GLT sequence, then $\kappa = \varsigma$ a.e. We write \( \{A_n\}_n \sim_{\mathrm{GLT}} \kappa \) to denote that $\{A_n\}_n$ is a GLT
sequence with symbol $\kappa$.
\vspace{0.3cm}

\noindent\textbf{Zero-distributed sequences:} Zero-distributed sequences are defined as matrix sequences $\{A_n\}_n$ such that $\{A_n\}_n \sim_\sigma 0$. Note that, for any $r \geq 1$, $\{A_n\}_n \sim_\sigma 0$ is equivalent to
$\{A_n\}_n \sim_\sigma O_r$, where $O_r$ is the $r \times r$ zero matrix. The following theorem, taken from \cite{garoni2017,  Serra2001}, provides a useful characterization for detecting this type of sequences. We use
the natural convention $1/\infty = 0$.
\begin{theorem}
	Let $\{A_n\}_n$ be a matrix sequence, with $A_n$ of size $d_n$. Then
	\begin{itemize}\label{lemma:zero-distr}
		\item $\{A_n\}_n \sim_\sigma 0$ if and only if $A_n = R_n + N_n$ with ${\operatorname{rank}(R_n)}/{d_n} \to 0$ and $ ||N_n|| \to 0$ as $n \to \infty$;
		\item $\{A_n\}_n \sim_\sigma 0$ if there exists $p \in [1,\infty]$ such that $
		{\|A_n\|_p}/{(d_n)^{1/p}} \to 0$ as $n \to \infty.$
	\end{itemize}
\end{theorem}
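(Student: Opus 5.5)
For the first item I would handle the two directions separately, working with the singular values $\sigma_1(A_n)\ge\dots\ge\sigma_{d_n}(A_n)$ and the definition $\{A_n\}_n\sim_\sigma 0$. By Definition~\ref{def:distrib} with $\psi\equiv 0$, this means $\frac{1}{d_n}\sum_i F(\sigma_i(A_n))\to F(0)$ for every $F\in C_c(\R)$. The first step is to show that this is equivalent to the counting statement
\begin{equation*}
\#\{i:\sigma_i(A_n)>\varepsilon\}=o(d_n)\quad\text{for every }\varepsilon>0 .
\end{equation*}
For one direction, take $F$ continuous with compact support, $0\le F\le 1$, $F(0)=1$, and $F=0$ outside $[-\varepsilon,\varepsilon]$. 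Then $F(\sigma_i)\le 1$ always and $F(\sigma_i)=0$ whenever $\sigma_i>\varepsilon$, so
\begin{equation*}
\frac{1}{d_n}\#\{i:\sigma_i>\varepsilon\}\le 1-\frac{1}{d_n}\sum_i F(\sigma_i)\to 0 .
\end{equation*}
For the converse, given $F\in C_c(\R)$ and $\delta>0$, use uniform continuity of $F$ to pick $\varepsilon$ with $|F(s)-F(0)|<\delta$ for $s\le\varepsilon$. Splitting the sum then gives
\begin{equation*}
\Bigl|\frac{1}{d_n}\sum_i F(\sigma_i)-F(0)\Bigr|\le \delta+2\|F\|_\infty\,\frac{\#\{\sigma_i>\varepsilon\}}{d_n}.
\end{equation*}

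\textbf{($\Rightarrow$).} Take the SVD $A_n=U_n\Sigma_nV_n^*$ and use the counting characterization to choose a sequence $\varepsilon_n\downarrow 0$ slowly enough that $k_n:=\#\{i:\sigma_i(A_n)>\varepsilon_n\}=o(d_n)$. This is a standard diagonal argument: for each $m$ there is an $N_m$ such that the count at level $1/m$ is at most $d_n/m$ for $n\ge N_m$, and one sets $\varepsilon_n=1/m$ for $N_m\le n<N_{m+1}$. Then define $R_n$ from the top $k_n$ singular triplets and let $N_n=A_n-R_n$. This gives $\rk R_n\le k_n=o(d_n)$ and $\|N_n\|=\sigma_{k_n+1}(A_n)\le\varepsilon_n\to 0$.

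\textbf{($\Leftarrow$).} I would use the Weyl-type interlacing inequality for singular values, $\sigma_{i+j-1}(X+Y)\le\sigma_i(X)+\sigma_j(Y)$. Taking $X=R_n$, $Y=N_n$ and $i=\rk R_n+1$ gives $\sigma_{\rk R_n+j}(A_n)\le\|N_n\|$ for all $j\ge 1$. Hence, once $\|N_n\|<\varepsilon$, at most $\rk R_n=o(d_n)$ singular values exceed $\varepsilon$, and the counting criterion yields $\{A_n\}_n\sim_\sigma 0$.

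For the second item, with $p<\infty$ and $\|\cdot\|_p$ the Schatten $p$-norm, I would use a Chebyshev/Markov bound:
\begin{equation*}
\#\{i:\sigma_i>\varepsilon\}\,\varepsilon^p\le\sum_i\sigma_i^p=\|A_n\|_p^p ,
\end{equation*}
so that
\begin{equation*}
\frac{\#\{i:\sigma_i>\varepsilon\}}{d_n}\le\varepsilon^{-p}\Bigl(\frac{\|A_n\|_p}{d_n^{1/p}}\Bigr)^p\to 0 .
\end{equation*}
For $p=\infty$ the hypothesis reads $\|A_n\|\to 0$, which is the first item with $R_n=0$.

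I expect the only delicate point to be the diagonal choice of $\varepsilon_n$ in the ($\Rightarrow$) direction. Everything else is a direct consequence of standard singular value inequalities.
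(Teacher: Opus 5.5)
Your proof is correct, and every step goes through: the equivalence of $\{A_n\}_n\sim_\sigma 0$ with $\#\{i:\sigma_i(A_n)>\varepsilon\}=o(d_n)$, the truncated SVD with a diagonally chosen threshold $\varepsilon_n\to 0$, Weyl's inequality giving $\sigma_{\operatorname{rank}(R_n)+j}(A_n)\le\|N_n\|$, and the Markov-type bound $\varepsilon^p\,\#\{i:\sigma_i(A_n)>\varepsilon\}\le\|A_n\|_p^p$ for the Schatten norm. The paper gives no proof of its own and simply imports the result from the GLT monograph of Garoni and Serra-Capizzano and from Serra-Capizzano's 2001 paper, where, as far as I recall, the argument is essentially the one you give.
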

\vspace{0.2cm}

\noindent\textbf{ Multilevel block Toeplitz matrices:} Given \(\textbf{n} \in \mathbb{N}^d \), a matrix of the form
\begin{equation*}
	[A_{\textbf{i}-\textbf{j}}]_{\textbf{i,j=1}}^{\textbf{n}} \in \mathbb{C}^{\nu(\textbf{n})r \times \nu(\textbf{n})r},
\end{equation*}
with blocks \( A_\textbf{k} \in \mathbb{C}^{r \times r} \), \( \textbf{k} \in \{-(\textbf{n}-1), \dots, \textbf{n}-1\} \), is called a multilevel block Toeplitz matrix, or, more precisely, a \( d \)-level \( r \)-block Toeplitz matrix.\\

Given a matrix-valued function \( f : [-\pi,\pi]^d \to \mathbb{C}^{r \times r} \) belonging to \( L^1([- \pi, \pi]^d) \), the \( \mathbf{n} \)-th Toeplitz matrix associated with \( f \) is defined as
\begin{equation*}
	T_\mathbf{n}(f):=[\hat{f}_{\mathbf{i-j}}]_{\mathbf{i,j=1}}^{\mathbf{n}} \in \mathbb{C}^{\nu(\mathbf{n})r \times \nu(\mathbf{n})r},
\end{equation*}
where
\begin{equation*}
	\hat{f}_\mathbf{k} = \frac{1}{(2\pi)^d} \int_{[-\pi,\pi]^d} f(\mathbf{\theta}) e^{-\hat{\iota} (\mathbf{k}, \mathbf{\theta)}} d\mathbf{\theta} \in \mathbb{C}^{r \times r}, \quad \mathbf{k} \in \mathbb{Z}^d,
\end{equation*}
are the Fourier coefficients of \( f \), in which \( \hat{\iota} \) denotes the imaginary unit, the integrals are computed componentwise, and \( (\mathbf{k}, \mathbf{\theta}) = k_1\theta_1 + \cdots + k_d\theta_d \). Equivalently, \( T_\mathbf{n}(f) \) can be expressed as
\begin{equation*}
	T_\mathbf{n}(f) =
	\sum_{|j_1|<n_1} \cdots \sum_{|j_d|<n_d} [J_{n_1}^{(j_1)}
	\otimes \cdots \otimes J^{(j_d)}_{n_d}] \otimes \hat{f}(j_1, \cdots, j_d),
\end{equation*}
where \( \otimes \) denotes the Kronecker tensor product between matrices, and \( J^{(l)}_{m} \) is the matrix of order \( m \) whose \( (i,j) \) entry equals 1 if \( i - j = l \) and zero otherwise.\\

\( \{T_{\mathbf{n}}(f)\}_{\mathbf{n}\in\mathbb{N}^d} \) is the family of (multilevel block) Toeplitz matrices associated with \( f \), which is called the generating function.
\par
\vspace{0.3cm}

\noindent\textbf{Block diagonal sampling matrices:} Given $d \geq 1$, $\mathbf{n} \in \mathbb{N}^d$ and a function $a : [0,1]^d \to \mathbb{C}^{r \times r}$, we define the multilevel block diagonal sampling matrix $D_{\mathbf{n}}(a)$ as the block diagonal matrix
\begin{equation}\label{eq:dna}
	D_{\mathbf{n}}(a) = \underset{\mathbf{i=1,\dots,n}}{\operatorname{diag}} a \left( \mathbf{\frac{i}{n}} \right) \in \mathbb{C}^{\nu(\mathbf{n})r \times \nu(\mathbf{n})r}.
\end{equation}
\vspace{0.4cm}

The GLT class satisfies several algebraic and topological properties that are treated in great detail and generality in \cite{barbarino2020uni,barbarino2020multi,garoni2017,garoni2018}. For the purposes of this work, it is sufficient to present them through their operative properties, listed below in the multilevel setting i.e. for $d\geq1$.

\begin{gltax} \label{glt:distrib}
	If $\{A_n\}_n \sim_\glt \kappa$, then $\{A_n\}_n \sim_\sigma \kappa$ in the sense of Definition \ref{def:distrib}, with $t=2d$ and $D=[0,1]^{d}\times[-\pi,\pi]^{d}$. If moreover each $A_n$ is Hermitian, then $\{A_n\}_n \sim_\lambda \kappa$, again in the sense of Definition \ref{def:distrib}, with $t=2d$.
\end{gltax}

\begin{gltax} \label{glt:symbols}
	It holds
	\begin{itemize}
		\item $\{T_\mathbf{n}(f)\}_\mathbf{n} \sim_\glt \kappa(\mathbf{x},\mathbf{\theta}) = f(\theta)$ for any $f: [-\pi,\pi]^d \to \mathbb{C}^{r \times r}$ with
		$f \in L^1([-\pi,\pi]^d)$;
		\item $\{D_\mathbf{n}(a)\}_\mathbf{n} \sim_\glt \kappa(\mathbf{x},\mathbf{\theta}) = a(x)$ for any Riemann-integrable $a:[0,1]^d \to \mathbb{C}^{r \times r}$
		;
		\item $\{Z_n\}_n \sim_\glt \kappa(\mathbf{x},\mathbf{\theta}) = O_r$ if and only if $\{Z_n\}_n \sim_\sigma 0$, i.e., $\{Z_n\}_n$ is zero-distributed.
	\end{itemize}
\end{gltax}

\begin{gltax} \label{glt:algebra}
	If $\{A_n\}_n \sim_\glt \kappa$ and $\{B_n\}_n \sim_\glt \varsigma$, then
	\begin{itemize}
		\item $\{A_n^*\}_n \sim_\glt \bar{\kappa}$, where $A_n^*$ denotes the conjugate transpose of $A_n$ and $\bar{\kappa}$ the complex conjugate of $\kappa$;
		\item $\{\alpha A_n + \beta B_n\}_n \sim_\glt \alpha\kappa +\beta\varsigma$ for any $\alpha,\beta\in\C$;
		\item $\{A_n B_n\}_n \sim_\glt \kappa\varsigma$;
		\item $\{A_n^\dagger\}_n \sim_\glt\kappa^{-1}$ for any $\kappa$ invertible a.e., where $A_n^\dagger$ denotes the Moore-Penrose pseudoinverse of $A_n$.
	\end{itemize}
\end{gltax}

\begin{gltax} \label{glt:kron}
	Let \( \{A_n\}_n \sim_\glt \kappa \) and \( \{B_m\}_m \sim_\glt \xi \) with
	\begin{align*}
		&\kappa(x_1, \theta_1) : [0,1]^d \times [-\pi, \pi]^d \to \mathbb{C}^{r \times r}, \\
		&\xi(x_2, \theta_2) : [0,1]^d \times [-\pi, \pi]^d \to \mathbb{C}^{r' \times r'}.
	\end{align*}
	
	Then, up to permutations, not present if $r=r'=1$, it holds that \( \{A_n \otimes B_m\}_N \sim_\glt \kappa \otimes \xi \), where \( (\kappa \otimes \xi): [0,1]^{2d} \times [-\pi, \pi]^{2d} \to \mathbb{C}^{rr' \times rr'} \) is given by
	\begin{equation*}
		(\kappa \otimes \xi)(x_1, x_2, \theta_1, \theta_2) := \kappa(x_1, \theta_1) \xi(x_2, \theta_2).
	\end{equation*}
\end{gltax}

We notice that the last axiom is not present in the original works \cite{garoni2017,garoni2018,barbarino2020uni,barbarino2020multi}, but is has been added very recently to the theory in \cite{Rosita-SIMAX} and used already in \cite{Ilyas2026,Rosita-SIMAX}.

\section{Discretization and matrix structures} \label{sec:discre}

In the current section, we present the discretization of the regularized problem \eqref{eq:quasi-problem} using a finite difference scheme \cite{Sun2020}. We then carefully discuss the global structure of the resulting coefficient matrix.


Let $n_{1}$, $n_{2}$ and $m$ be positive integers and denote the corresponding spatial and temporal meshes as
\begin{alignat}{3}
	x_i &= i\Delta x, && \qquad\Delta x = \frac{1}{n_{1}+1}, && \qquad i=0,1,\ldots,n_{1}, \label{eq:1space-mesh} \\
		y_j &= j\Delta y, && \qquad\Delta y = \frac{1}{n_{2}+1}, && \qquad j=0,1,\ldots,n_{2}, \label{eq:2space-mesh} \\
	t_s &= s\Delta t, && \qquad\Delta t=\frac{T}{m}, && \qquad s=0,1,\ldots,m. \label{eq:time-mesh}
\end{alignat}
The approximate values of the functions at the grid points are denoted as follows
\begin{alignat*}{3}
	u_{i,j}^{s} &:= u(x_i,y_{j},t_s), & \qquad
	a_{i,j} &:= a(x_i,y_{j}), & \qquad
	f_{i,j} &:= f_{\varepsilon,\lambda}(x_i,y_{j}),
	\\
	\varphi_{i,j} &:= \varphi(x_i,y_{j}), & \qquad
	\phi_{i,j} &:= \phi_\varepsilon(x_i,y_{j}), & \qquad
	q^{s} &:= q(t_s).
\end{alignat*}
To deal with the tempered fractional derivative, we use the $L1$ formula:
\begin{equation*}
	{}^T\!D_{L1}^{\xi,\rho} u(x_{i},y_{j},t_{s}) = \frac{(\Delta t)^{-\xi}}{\Gamma(2-\xi)} \left[ b_0 u_{i,j}^{s} - \sum_{l=1}^{s-1} \left( b_{s-l-1} - b_{s-l} \right) e^{\rho(t_l-t_s)} u_{i,j}^{l} - b_{s-1} e^{\rho(t_0-t_s)} u_{i,j)}^{0} \right],
\end{equation*}
in which
\begin{equation*}
	b_l = (l+1)^{1-\xi}-l^{1-\xi},
	\qquad l = 0,1,\ldots
\end{equation*}
Denoting
\begin{equation*}
	\alpha_{m} := (\Delta t)^\xi\Gamma(2-\xi)
	= \bigg(\frac{T}{N}\bigg)^\xi\Gamma(2-\xi)
\end{equation*}
and
\begin{equation}
	\begin{split} \label{eq:time-coeff}
		\gamma_0 &:= b_0, \\
		\gamma_l &:= b_l - b_{l-1} = (l+1)^{1-\xi} - 2l^{1-\xi} + (l-1)^{1-\xi}, \qquad l=1,\ldots,m-1,
	\end{split}
\end{equation}
and using $t_s = s\Delta t$, the $L1$ approximation of the tempered fractional derivative is expressed as
\begin{equation} \label{eq:tem-discre}
	{}^T\!D_{L1}^{\xi,\rho} u(x_{i},y_{j},t_s) = \frac{1}{\alpha_m} \left[ \gamma_0 u_{i,j}^{s} + \sum_{l=1}^{s-1} \gamma_{s-l} e^{-(s-l)\rho\Delta t} u_{i,j}^{l} - b_{s-1} e^{-s \rho \Delta t} u_{i,j}^{0} \right].
\end{equation}
The following error estimate holds; {see \cite[page 10]{Zhao2021}}. Assuming that $u(\cdot,\cdot,t)\in C^2(0,T)$, for $t_s\in (0,T)$ we have
\begin{align*}
	\abs{{}^T\!D_t^{\xi,\rho} u(x,y,t)|_{t=t_s}-{}^T\!D_{L1}^{\xi,\rho}u(x,y,t_s)} \leq& \chi_\xi
	\Big[ \rho^2 \max_{t_0\leq t\leq t_s} \abs{u(x,y,t)} + 2\rho \max_{t_0\leq t\leq t_s} \abs{u_t(x,y,t)} \\+& \Delta t^{2-\xi} \max_{t_0\leq t\leq t_s} \abs{u_{tt}(x,y,t)} \Big],
\end{align*}
where $u_t$ and $u_{tt}$ denote the first and second partial derivative of $u$ with respect to $t$, and
\begin{equation*}
	\chi_\xi :=  \frac{1}{2\Gamma(1-\xi)} \left[ \frac{1}{4} + \frac{\xi}{(1-\xi)(2-\xi)} \right].
\end{equation*}

To discretize the Caputo fractional derivative with respect to $x$, we exploit the $L1$ formula:
\begin{equation} \label{eq:Caputo-xdiscre}
	{}^C\!D_{L1}^{\eta_{1}} u(x_i,y_{j},t_{s}) = \frac{1}{\beta_{n_{1}}} \left[ \delta_0 u_{i,j}^{s} + \sum_{k_{1}=1}^{i-1} \delta_{i-k_{1}} u_{k_{1},j}^{s} - d_{i-1} u_{0,j}^{s} \right],
\end{equation}
where
\begin{equation}\label{eq:beta-n1}
	\beta_{n_1} := (\Delta x)^{\eta_{1}} \Gamma(2-\eta_{1}) = \bigg(\frac{1}{n_{1}+1}\bigg)^{\eta_{1}} \Gamma(2-\eta_{1})
\end{equation}
and
\begin{equation} \label{eq:space-xcoeff}
	\begin{split}
		d_{k_{1}} &:= (k_{1}+1)^{1-\eta_{1}}-k_{1}^{1-\eta_{1}},
		\qquad k_{1} = 0,1,\ldots \\
		\delta_0 &:= d_0, \\
		\delta_{k_{1}} &:= d_{k_{1}} - d_{k_{1}-1} = (k_{1}+1)^{1-\eta_{1}} - 2k_{1}^{1-\eta_{1}} + (k_{1}-1)^{1-\eta_{1}}, \qquad k_{1}=1,\ldots,n_{1}-1.
	\end{split}
\end{equation}
The corresponding error estimate takes the form
\begin{equation*}
	\abs{{}^C\!D_x^{\eta_{1}} u(x,y,t)|_{x=x_i}-{}^C\!D_{L1}^{\eta_{1}} u(x_i,y,t)} \leq
	\frac{1}{2\Gamma(1-\eta_{1})} \left[ \frac{1}{4} + \frac{\eta_{1}}{(1-\eta_{1})(2-\eta_{1})} \right] \Delta x^{2-\eta_{1}} \max_{0\leq x \leq x_i} \abs{u_{xx}(x,t)},
\end{equation*}
with $u(x,\cdot,\cdot)\in C^2(0,1)$ and $x_i\in (0,1)$; {see \cite[page 32]{Sun2020}}.
\vspace{0.3cm}

Now, we proceed to discretize the Caputo fractional derivative with respect to $y$ by employing the $L1$ approximation formula
\begin{equation} \label{eq:Caputo-ydiscre}
	{}^C\!D_{L1}^{\eta_{2}} u(x_i,y_{j},t_{s}) = \frac{1}{\beta_{n_{2}}} \left[ \sigma_0 u_{i,j}^{s} + \sum_{k_{2}=1}^{j-1} \sigma_{j-k_{2}} u_{i,k_{2}}^{s} - c_{j-1} u_{i,0}^{s} \right],
\end{equation}
where
\begin{equation}\label{eq:beta-n2}
	\beta_{n_2} := (\Delta x)^{\eta_{2}} \Gamma(2-\eta_{2}) = \bigg(\frac{1}{n_{2}+1}\bigg)^{\eta_{2}} \Gamma(2-\eta_{2})
\end{equation}
and
\begin{equation} \label{eq:space-ycoeff}
	\begin{split}
		c_{k_{2}} &:= (k_{2}+1)^{1-\eta_{2}}-k_{2}^{1-\eta_{2}},
		\qquad k_{2} = 0,1,\ldots \\
		\sigma_0 &:= c_0, \\
		\sigma_{k_2} &:= c_{k_2} - c_{k_{2}-1} = (k_{2}+1)^{1-\eta_{2}} - 2k_{2}^{1-\eta_{2}} + (k_{2}-1)^{1-\eta_{2}}, \qquad k_{2}=1,\ldots,n_{2}-1.
	\end{split}
\end{equation}
\vspace{0.3cm}

Substituting the fractional derivatives in problem \eqref{eq:quasi-problem} with their respective $L1$ approximations \eqref{eq:tem-discre} and \eqref{eq:Caputo-xdiscre} and \eqref{eq:Caputo-ydiscre}, and omitting the truncation errors, the finite difference scheme is constructed as follows:
\begin{equation*}
	\begin{dcases}
	\frac{1}{\alpha_m} \left[ \gamma_0 u_{i,j}^{s} + \sum_{l=1}^{s-1} \gamma_{s-l} e^{-(n-l)\rho\Delta t} u_{i,j}^{l} - b_{s-1} e^{-s \rho \Delta t} u_{i,j}^{0} \right]
		+ \frac{a_{i,j}}{\beta_{n_{1}}} \left[ \delta_0 u_{i,j}^{s} + \sum_{k_{1}=1}^{i-1} \delta_{i-k_{1}} u_{k_{1},j}^{s} - d_{i-1} u_{0,j}^{s} \right]\\+\frac{a_{i,j}}{\beta_{n_{2}}} \left[ \sigma_0 u_{i,j}^{s} + \sum_{k_{2}=1}^{j-1} \sigma_{j-k_{2}} u_{i,k_{2}}^{s} - c_{j-1} u_{j,0}^{s} \right]
		= q^{s} f_{i,j},
		\\
		u_{0,j}^{s} = u_{n_{1}+1,j}^{s} = 0,\qquad u_{i,0}^{s} = u_{i,n_{2}+1}^{s} = 0,
		\\
		u_{i,j}^{0} = \varphi_{i,j},
		\\
		u_{i,j}^{m} = \phi_{i,j} - \lambda \frac{a_{i,j}}{\beta_{n_{1}}}
		\left[ \delta_0 f_{i,j} + \sum_{k_{1}=1}^{i-1} \delta_{i-k_{1}} f_{k_{1},j} - d_{i-1} f_{0,j} \right] - \lambda \frac{a_{i,j}}{\beta_{n_{2}}}
		\left[ \sigma_0 f_{i,j} + \sum_{k_{2}=1}^{j-1} \sigma_{j-k_{2}} f_{i,k_{2}} - c_{j-1} f_{i,0} \right].
	\end{dcases}
\end{equation*}
By defining the corresponding vectors
\begin{alignat*}{3}
	\mathbf{u}^{s} &:= \begin{bmatrix} u_{i,j}^{s} \end{bmatrix}_{i=1,\ldots,n_{1},\,j=1,\ldots,n_{2}}, & \qquad
	\mathbf{a} &:= \begin{bmatrix} a_{i,j} \end{bmatrix}_{i=1,\ldots,n_{1},\,j=1,\ldots,n_{2}}, & \qquad
	\mathbf{f} &:= \begin{bmatrix} f_{i,j} \end{bmatrix}_{i=1,\ldots,n_{1},\,j=1,\ldots,n_{2}},
	\\
	\boldsymbol{\varphi} &:= \begin{bmatrix} \varphi_{i,j} \end{bmatrix}_{i=1,\ldots,n_{1},\,j=1,\ldots,n_{2}}, & \qquad
	\boldsymbol{\phi}&:= \begin{bmatrix} \phi_{i,j} \end{bmatrix}_{i=1,\ldots,n_{1},\,j=1,\ldots,n_{2}},
\end{alignat*}
Now, let $\mathbf{n}=n_{1}n_{2}$, then we can write the numerical scheme in matrix form as
\begin{equation*}
	\begin{dcases}
		\gamma_0 \mathbf{u}^{s} + \sum_{l=1}^{s-1} \gamma_{s-l} e^{-(s-l)\rho\Delta t} \mathbf{u}^{l}
		+  D_{\mathbf{n}}(a) \left(B_{\mathbf{n}}^{x}+B_{\mathbf{n}}^{y}\right) \mathbf{u}^{s}
		- \alpha_m q^{s} \mathbf{f}
		= b_{s-1} e^{-s\rho\Delta t} \boldsymbol{\varphi},
		& \quad s=1,\ldots,m, \\
		\mathbf{u}^{m} + {\lambda} D_{\mathbf{n}}(a) \left(B_{\mathbf{n}}^{x}+B_{\mathbf{n}}^{y}\right) \mathbf{f} = \boldsymbol{\phi},
	\end{dcases}
\end{equation*}
where $D_{\mathbf{n}}(a)$ is the diagonal sampling matrix of size $M$ associated to $a(x,y)$, while $B_{M}^{x}$ are $B_{M}^{y}$ are defined as follows
\begin{equation}\label{eq:xB}
B_{\mathbf{n}}^{x}=I_{n_{2}} \otimes\frac{\alpha_{m}}{\beta_{n_{1}}} B^{x}_{n_{1}},
\end{equation}
and
\begin{equation}\label{eq:yB}
	B_{\mathbf{n}}^{y}=\frac{\alpha_{m}}{\beta_{n_{2}}}B^{y}_{n_{2}} \otimes I_{n_{1}}.
\end{equation}
Here $B_{n_{1}}$ are $B_{n_{2}}$  are the following lower triangular Toeplitz matrices
\begin{equation*}
	B^{x}_{n_{1}} := \begin{bmatrix}
		\delta_0 & 0 & \cdots & \cdots & 0 \\
		\delta_1 & \delta_0 & 0 & & \vdots \\
		\delta_2 & \delta_1 & \delta_0 & \ddots & \vdots \\
		\vdots & \ddots & \ddots & \ddots& 0  \\
		\delta_{n_{1}-1} & \cdots & \delta_2 & \delta_1 & \delta_0
	\end{bmatrix},
\end{equation*}
and
\begin{equation*}
	B^{y}_{n_{2}} := \begin{bmatrix}
		\sigma_0 & 0 & \cdots & \cdots & 0 \\
		\sigma_1 & \delta_0 & 0 & & \vdots \\
		\sigma_2 & \delta_1 & \delta_0 & \ddots & \vdots \\
		\vdots & \ddots & \ddots & \ddots& 0  \\
		\sigma_{n_{2}-1} & \cdots & \delta_2 & \sigma_1 & \sigma_0
	\end{bmatrix}.
\end{equation*}
By gathering the equations for all the time levels and the regularization equation, we obtain the all-at-once linear system
\begin{equation} \label{eq:lin-syst}
	A_{\widetilde{\mathbf{N}}}\mathbf{v}_\lambda = \mathbf{z}_\lambda,
\end{equation}
where $\widetilde{\mathbf{N}} = \widetilde{\mathbf{N}}(m,\mathbf{n}) := (m+1)\nu(\mathbf{n})$ and
\begin{gather*}
	\renewcommand{\arraystretch}{1.5}
	A_{\widetilde{\mathbf{N}}} := \left[ \begin{array}{cccc|c}
		\gamma_0 I_{\mathbf{n}} +  G_\mathbf{n} & \mathrm{O}_\mathbf{n} & \cdots & \mathrm{O}_\mathbf{n} & -\alpha_m q^{1} I_\mathbf{n} \\
		\gamma_1 e^{-\rho \Delta t} I_\mathbf{n} & \gamma_0 I_\mathbf{n} +  G_\mathbf{n} & \ddots & \vdots & -\alpha_m q^{2} I_\mathbf{n} \\
		\vdots & \ddots & \ddots & \mathrm{O}_\mathbf{n} & \vdots \\
		\gamma_{m-1} e^{-(m-1)\rho\Delta t} I_\mathbf{n} & \cdots & \gamma_1 e^{-\rho \Delta t} I_\mathbf{n} & \gamma_0 I_\mathbf{n} +  G_\mathbf{n} & -\alpha_m q^{m} I_\mathbf{n} \\
		\hline
		\mathrm{O}_M & \cdots & \mathrm{O}_\mathbf{n} & I_\mathbf{n} & \frac{{\lambda}}{\alpha_{m}} G_\mathbf{n}
	\end{array} \right]
	\in \C^{\widetilde{\mathbf{N}}\times \widetilde{\mathbf{N}}},
	\\
	\renewcommand{\arraystretch}{1.5}
	\mathbf{v}_{\lambda} :=
	\left[ \begin{array}{c}
		\mathbf{u}^{1} \\ \mathbf{u}^{2} \\ \vdots \\ \mathbf{u}^{m} \\ \hline \mathbf{f}
	\end{array} \right]
	\in\C^{\widetilde{\mathbf{N}}},
	\qquad
	\mathbf{z}_{\lambda} :=
	\left[ \begin{array}{c}
		b_0 e^{-\rho\Delta t} \boldsymbol{\varphi} \\
		b_1 e^{-2\rho\Delta t} \boldsymbol{\varphi} \\
		\vdots \\
		b_{m-1} e^{-m\rho\Delta t} \boldsymbol{\varphi} \\
		\hline \boldsymbol{\phi}
	\end{array} \right]
	\in\C^{\widetilde{\mathbf{N}}},
\end{gather*}
in which $I_\mathbf{n}$ is the $\mathbf{n}\times\mathbf{n}$ identity matrix, $\mathrm{O}_\mathbf{n}$ is the $\mathbf{n}\times \mathbf{n}$ zero matrix, and
\begin{equation} \label{eq:Gm-def}
	G_\mathbf{n} := D_\mathbf{n}(a) \left(B_{\mathbf{n}}^{x}+B_{\mathbf{n}}^{y}\right).
\end{equation}
Moreover, setting
\begin{equation*}
	U_m := \begin{bmatrix}
		\gamma_0 & 0 & \cdots & 0 \\
		\gamma_1 e^{-\rho\Delta t} & \gamma_0 & \ddots & \vdots \\
		\vdots & \ddots & \ddots & 0 \\
		\gamma_{m-1} e^{-\rho(m-1)\Delta t} & \cdots & \gamma_1 e^{-\rho\Delta t} & \gamma_0
	\end{bmatrix},
\end{equation*}
the coefficient matrix can be expressed into the compact form
\begin{equation} \label{eq:coeff-mat}
	\renewcommand{\arraystretch}{1.5}
	A_{\widetilde{\mathbf{N}}} = \left[ \begin{array}{c|c}
		U_m \otimes I_\mathbf{n} + I_m\otimes G_\mathbf{n} & -\alpha_m \mathbf{q}\otimes I_\mathbf{n} \\
		\hline
		\mathbf{e}_{m}^T \otimes I_\mathbf{n} & \frac{{\lambda}}{\alpha_{m}} G_\mathbf{n}
	\end{array} \right],
\end{equation}
in which
\begin{equation*}
	\mathbf{e}_m := \begin{bmatrix}
		0 \\ \vdots \\ 0 \\ 1
	\end{bmatrix} \in\R^m,
	\qquad
	\mathbf{q} := \begin{bmatrix}
		q^{1} \\ q^{1} \\ \vdots \\ q^{m}
	\end{bmatrix} \in\R^m.
\end{equation*}
\vspace{0.3cm}

\section{GLT analysis} \label{sec:glt-analysis}
This section presents the spectral analysis of the coefficient matrix sequence via GLT theory to describe its asymptotic behavior. We first report the symbols of $\{B^{x}_{n}\}_{n}$ and the time block $\{U_m\}_{m}$ \cite{Ilyas2026}, then analyze the space block $\{G_{\mathbf{n}}\}_{\mathbf{n}}$, and finally consider the full coefficient matrix sequence.

\begin{theorem} \cite{Ilyas2026}\label{thm:GLT-Bm}
	There exist continuous $2\pi$-periodic functions $g_{\eta_{1}}$ and $g_{\eta_{2}}$ such that
	\begin{align*}
		\{B^{x}_{n_{1}}\}_{n_{1}} \sim_{\glt,\sigma}& g_{\eta_{1}}(\theta_{1}),
		\qquad \theta_{1}\in [-\pi,\pi],\\
		\{B^{y}_{n_{2}}\}_{n_{2}} \sim_{\glt,\sigma}& g_{\eta_{2}}(\theta_{2}),
		\qquad \theta_{2}\in [-\pi,\pi].
	\end{align*}
	Moreover,
	\begin{align*}
		\{B^{x}_{n_{1}}\}_{n_{1}} \sim_\lambda& \delta_0,\\
			\{B^{y}_{n_{2}}\}_{n_{2}} \sim_\lambda& \sigma_0.
	\end{align*}
\end{theorem}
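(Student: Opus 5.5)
The plan is to identify $B^{x}_{n_1}$ with a Toeplitz matrix generated by a Wiener-class symbol, apply GLT Axioms \ref{glt:symbols} and \ref{glt:distrib}, and then exploit lower-triangularity for the eigenvalues. The $y$-direction is identical after replacing $\eta_1,\delta_k$ by $\eta_2,\sigma_k$. (In the displayed $B^y_{n_2}$ some entries are written as $\delta_k$; we read them as $\sigma_k$, which is what the scheme \eqref{eq:Caputo-ydiscre} produces.)

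\textbf{Step 1: define the symbol.} Set
\[
g_{\eta_1}(\theta)=\sum_{k=0}^{\infty}\delta_k e^{\hat{\iota} k\theta}.
\]
By construction $B^{x}_{n_1}=T_{n_1}(g_{\eta_1})$, since the $(i,j)$ entry is $\delta_{i-j}$ for $i\ge j$ and zero otherwise.

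\textbf{Step 2: absolute summability of $\{\delta_k\}$.} This is the only analytic point. From \eqref{eq:space-xcoeff}, $\delta_k$ is a second difference of $h(t)=t^{1-\eta_1}$. For $k\ge1$, a Taylor/mean-value argument gives
\[
\delta_k = h''(\zeta_k)
\]
for some $\zeta_k\in(k-1,k+1)$. Since $h''(t)=-(1-\eta_1)\eta_1 t^{-1-\eta_1}<0$, we get $\delta_k<0$ and $|\delta_k|=O(k^{-1-\eta_1})$, which is summable. More directly, the sum telescopes:
\[
\sum_{k=1}^{K}\delta_k = d_K-d_0 \longrightarrow -1,
\]
because $d_K\to0$ and $d_0=1$. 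Hence $\sum_{k\ge1}|\delta_k|=1$, and the series defining $g_{\eta_1}$ converges absolutely and uniformly. Therefore $g_{\eta_1}$ is continuous and $2\pi$-periodic, and in particular lies in $L^1$.

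\textbf{Step 3: GLT and singular value distribution.} The first item of GLT Axiom \ref{glt:symbols} (with $d=r=1$) gives $\{B^x_{n_1}\}_{n_1}\sim_\glt g_{\eta_1}(\theta_1)$. GLT Axiom \ref{glt:distrib} then yields $\sim_\sigma g_{\eta_1}$.

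\textbf{Step 4: eigenvalue distribution.} We cannot use the Hermitian part of GLT Axiom \ref{glt:distrib}, since $B^x_{n_1}$ is not Hermitian. However, $B^x_{n_1}$ is lower triangular with constant diagonal $\delta_0$, so all its eigenvalues equal $\delta_0=1$. For any $F\in C_c(\C)$,
\[
\frac{1}{n_1}\sum_{i} F(\lambda_i) = F(\delta_0),
\]
which is exactly the distribution by the constant function $\delta_0$ in Definition \ref{def:distrib}. Thus $\{B^x_{n_1}\}_{n_1}\sim_\lambda\delta_0$, and likewise $\{B^y_{n_2}\}_{n_2}\sim_\lambda\sigma_0$.

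The only step requiring any care is Step 2, and the telescoping identity settles it immediately.
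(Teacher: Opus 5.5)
Your proof is correct and takes essentially the route the paper relies on: the statement is imported from \cite{Ilyas2026}, and the paper itself, in the proof of Corollary~\ref{cor:distr}, uses exactly the identification $B^{x}_{n_1}=T_{n_1}(g_{\eta_1})$ together with the GLT axioms, and lower triangularity (diagonal $\delta_0=\sigma_0=1$) to read off the eigenvalues. Your telescoping-plus-sign argument, giving $\sum_{k\ge1}|\delta_k|=1$, is a clean way to show that $g_{\eta_1}$ has an absolutely summable Fourier series, hence is continuous and $2\pi$-periodic.
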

\begin{theorem} \cite{Ilyas2026}\label{thm:GLT-Un}
	There exists a continuous $2\pi$-periodic function $h_\xi$ such that
	\begin{equation*}
		\{U_m\}_m \sim_{\glt,\sigma} h_\xi(\theta),
		\qquad \theta\in [-\pi,\pi].
	\end{equation*}
	Moreover,
	\begin{equation*}
		\{U_m\}_m \sim_\lambda \gamma_0.
	\end{equation*}
\end{theorem}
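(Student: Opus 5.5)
Since $U_m$ is a lower triangular Toeplitz matrix, I will prove the statement by writing it as $U_m=T_m(h_\xi)$ for an explicit symbol and then invoking the GLT axioms. Its entries are $[U_m]_{k,l}=\gamma_{k-l}e^{-(k-l)\rho\Delta t}$ for $k\ge l$ and zero otherwise. The first step is a decay bound on the coefficients. For $l\ge1$, $\gamma_l=(l+1)^{1-\xi}-2l^{1-\xi}+(l-1)^{1-\xi}$ is a second difference of the concave function $t\mapsto t^{1-\xi}$. By the mean value theorem applied twice, $\gamma_l<0$ and $|\gamma_l|\le \xi(1-\xi)(l-1)^{-1-\xi}$ for $l\ge2$. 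Hence $\sum_{l\ge0}|\gamma_l|<\infty$. In fact telescoping gives $\sum_{l\ge1}\gamma_l=\lim_L (b_L-b_0)=-1$, since $b_L\to0$.

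One wrinkle is that the entries also carry the factor $e^{-l\rho\Delta t}=e^{-l\rho T/m}$, which depends on $m$. So $U_m$ is not literally $T_m(f)$ for a fixed $f$. I would first take the $m$-independent candidate
\[
h_\xi(\theta)=\sum_{l=0}^{\infty}\gamma_l e^{\hat\iota l\theta},
\]
which is continuous and $2\pi$-periodic by absolute summability (Weierstrass M-test). I then compare $U_m$ with $T_m(h_\xi)$ through the difference $E_m:=U_m-T_m(h_\xi)$, which is lower triangular Toeplitz with entries $\gamma_l(e^{-l\rho T/m}-1)$. Its spectral norm is at most its $\ell^1$ row bound $\sum_{l<m}|\gamma_l|\,|1-e^{-l\rho T/m}|$. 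I split this sum at $l=L$. The head is bounded by $\rho T L/m\cdot\sum|\gamma_l|$, and the tail is at most $2\sum_{l>L}|\gamma_l|$. Choosing $L=\sqrt m$ makes both parts tend to $0$, so $\|E_m\|\to0$. By Theorem \ref{lemma:zero-distr}, $\{E_m\}_m\sim_\sigma0$. GLT Axiom \ref{glt:symbols} then gives $\{T_m(h_\xi)\}\sim_\glt h_\xi$ and $\{E_m\}\sim_\glt0$. Axiom \ref{glt:algebra} yields $\{U_m\}_m\sim_\glt h_\xi$, and Axiom \ref{glt:distrib} yields the singular value distribution. This mirrors the statement for $B^x_{n_1}$ in Theorem \ref{thm:GLT-Bm}, where $g_{\eta}$ is built from $\delta_k$ in the same way, without the exponential factor.

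The eigenvalue statement is immediate from triangularity: every eigenvalue of $U_m$ equals the diagonal entry $\gamma_0=b_0=1$. Thus for every $F\in C_c(\C)$ we have $\frac1m\sum_i F(\lambda_i(U_m))=F(\gamma_0)$, which is exactly $\{U_m\}_m\sim_\lambda\gamma_0$ with a constant symbol. I will remark that this does not contradict the singular value symbol $h_\xi$, because $U_m$ is highly non-normal.

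The only genuinely delicate step is handling the $m$-dependent exponential weights. If the $\sqrt m$ splitting felt fragile, I would instead bound $|1-e^{-l\rho T/m}|\le \min(1,l\rho T/m)$ and apply dominated convergence to the series $\sum|\gamma_l|\min(1,l\rho T/m)$. Everything else reduces to the coefficient decay estimate and the axioms listed in Section \ref{sec:prelim}.
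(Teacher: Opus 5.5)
Your proof is correct. The paper does not prove this statement itself; it is quoted from \cite{Ilyas2026}. When the paper uses it in the proof of Theorem~\ref{thm:coeff-mat-sym}, it simply writes $U_{m+1}\otimes I_{\mathbf{n}}=T_{m+1}(h_\xi)\otimes T_{\mathbf{n}}(1)$. That is, it treats $U_m$ as the Toeplitz matrix of a fixed symbol and silently drops the $m$-dependent weights $e^{-l\rho T/m}$. Your bound $\|U_m-T_m(h_\xi)\|\le\sum_{l\ge1}|\gamma_l|\min(1,l\rho T/m)\to0$ is exactly what makes that identification legitimate, up to a zero-distributed term. Your eigenvalue argument (triangularity plus $\gamma_0=b_0=1$) is the same one the paper uses for $G_{\mathbf{n}}$ in Corollary~\ref{cor:distr}.

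An alternative worth knowing is the exact factorization
\[
U_m=D_m\bigl(e^{-\rho T x}\bigr)\,T_m(h_\xi)\,D_m\bigl(e^{\rho T x}\bigr),
\]
which follows from $\gamma_{k-l}e^{-(k-l)\rho\Delta t}=e^{-k\rho\Delta t}\,\gamma_{k-l}\,e^{l\rho\Delta t}$ and $\Delta t=T/m$. The product rule in GLT Axiom~\ref{glt:algebra} then gives the symbol $e^{-\rho Tx}h_\xi(\theta)e^{\rho Tx}=h_\xi(\theta)$ without any estimate, and the factorization shows that $U_m$ is similar to $T_m(h_\xi)$. That route is shorter but relies on the geometric form of the weights. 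Yours only needs $\sum_l|\gamma_l|<\infty$ and bounded weights tending to $1$ along each fixed diagonal. It also produces the vanishing-norm correction that directly justifies the Kronecker identification used later in the paper.
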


\begin{corollary}\label{cor:distr}
	If $a:[0,1]^2\to\R$ is continuous a.e. and if
	\begin{equation*}
		\frac{\alpha_m}{\beta_{n_{1}}} \to \nu_{1},\quad \frac{\alpha_m}{\beta_{n_{2}}} \to \nu_{2}, \qquad \text{as } m,n_{1},n_{2}\to\infty,
	\end{equation*}
	with $\nu_{1}\neq 0$ or $\nu_{2}\neq 0$. Then, we have
	\begin{equation*}
		\{G_\mathbf{n}\}_\mathbf{n}  \sim_{\glt,\sigma} a(x,y) \left(\nu_{1}g_{\eta_{1}}(\theta_{1})+\nu_{2}g_{\eta_{2}}(\theta_{2})\right),
		\qquad (x,y)\in [0,1]^{2}, \;(\theta_{1},\, \theta_{2})\in [-\pi,\pi]^{2},
	\end{equation*}
	with $g_{\eta_{1}}$ and $g_{\eta_{2}}$ as in Theorem \ref{thm:GLT-Bm}. Moreover,
	\begin{equation*}
		\{G_\mathbf{n}\}_\mathbf{n}  \sim_\lambda  a(x,y)\left(\delta_0+\sigma_{0}\right),
		\qquad (x,y)\in [0,1]^{2}.
	\end{equation*}
\end{corollary}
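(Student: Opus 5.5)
The plan is to build the GLT symbol of $\{G_\mathbf{n}\}_\mathbf{n}$ from the axioms, starting from the factorization \eqref{eq:Gm-def}, $G_\mathbf{n}=D_\mathbf{n}(a)\,(B_\mathbf{n}^x+B_\mathbf{n}^y)$. First I treat the two Kronecker factors. By Theorem \ref{thm:GLT-Bm}, $\{B^{x}_{n_1}\}_{n_1}\sim_\glt g_{\eta_1}(\theta_1)$ and $\{B^{y}_{n_2}\}_{n_2}\sim_\glt g_{\eta_2}(\theta_2)$. Also $\{I_{n}\}_n=\{T_n(1)\}_n\sim_\glt 1$ by GLT Axiom \ref{glt:symbols}.

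GLT Axiom \ref{glt:kron} (with $r=r'=1$, so no permutation is needed) then gives
\[
\{I_{n_2}\otimes B^x_{n_1}\}_\mathbf{n}\sim_\glt g_{\eta_1}(\theta_1), \qquad \{B^y_{n_2}\otimes I_{n_1}\}_\mathbf{n}\sim_\glt g_{\eta_2}(\theta_2),
\]
viewed as bivariate symbols on $[0,1]^2\times[-\pi,\pi]^2$. Care is needed to match the variable ordering with the lexicographic ordering used in $D_\mathbf{n}(a)$. A cleaner alternative is to note that these are two-level Toeplitz matrices $T_\mathbf{n}(g_{\eta_1}(\theta_1))$ and $T_\mathbf{n}(g_{\eta_2}(\theta_2))$, where the generating functions are the $L^1$ functions depending on one variable each. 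This avoids the Kronecker axiom altogether.

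Next come the scalar factors. I write
\[
\frac{\alpha_m}{\beta_{n_1}}B^x_{n_1}=\nu_1 B^x_{n_1}+\Big(\frac{\alpha_m}{\beta_{n_1}}-\nu_1\Big)B^x_{n_1}.
\]
The second term is zero-distributed because its spectral norm tends to zero. That requires a uniform bound $\|B^x_{n_1}\|\le C$. I get it from the absolute summability of the coefficients $\delta_k$: we have $\delta_0=1$, the $\delta_k$ for $k\ge1$ are negative, and they telescope, so $\sum_k|\delta_k|\le 2$. The same argument works in $y$. Theorem \ref{lemma:zero-distr} and GLT Axiom \ref{glt:symbols} then show the remainder has symbol $0$. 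With the linearity part of GLT Axiom \ref{glt:algebra} this gives
\[
\{B^x_\mathbf{n}+B^y_\mathbf{n}\}_\mathbf{n}\sim_\glt \nu_1 g_{\eta_1}(\theta_1)+\nu_2 g_{\eta_2}(\theta_2).
\]
Finally, $a$ is continuous a.e. and bounded on $[0,1]^2$, hence Riemann integrable, so $\{D_\mathbf{n}(a)\}_\mathbf{n}\sim_\glt a(x,y)$. The product rule in GLT Axiom \ref{glt:algebra} then yields the claimed GLT symbol, and GLT Axiom \ref{glt:distrib} yields the singular value distribution $\sim_\sigma$.

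The eigenvalue statement is the main obstacle, since $G_\mathbf{n}$ is non-Hermitian and GLT Axiom \ref{glt:distrib} does not apply. My first attempt uses the fact that $B^x_\mathbf{n}+B^y_\mathbf{n}$ is lower triangular in the lexicographic ordering, with constant diagonal $\frac{\alpha_m}{\beta_{n_1}}\delta_0+\frac{\alpha_m}{\beta_{n_2}}\sigma_0$. Multiplying by the diagonal $D_\mathbf{n}(a)$ keeps it lower triangular. Hence the eigenvalues of $G_\mathbf{n}$ are exactly
\[
a(x_i,y_j)\Big(\frac{\alpha_m}{\beta_{n_1}}\delta_0+\frac{\alpha_m}{\beta_{n_2}}\sigma_0\Big),
\]
that is, a uniform sampling of $a$ times a constant converging to $\nu_1\delta_0+\nu_2\sigma_0$.

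To finish I take any $F\in C_c(\C)$. The averages $\frac{1}{\nu(\mathbf{n})}\sum F(\cdot)$ are Riemann sums of $F\circ(a\cdot c_\mathbf{n})$, with $c_\mathbf{n}$ the diagonal constant, and they converge to $\int_{[0,1]^2}F(a(x,y)(\nu_1\delta_0+\nu_2\sigma_0))\,dx\,dy$ by uniform continuity of $F$ and Riemann integrability of $a$. This matches the stated distribution, where I read $\delta_0+\sigma_0$ as scaled by the limits $\nu_1,\nu_2$ (equal to it when $\nu_1=\nu_2=1$). I will flag this normalization in the proof.
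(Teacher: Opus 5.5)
Your proof follows the paper's route: $\{D_\mathbf{n}(a)\}_\mathbf{n}\sim_\glt a$, the Kronecker (or, as you observe, directly two-level Toeplitz) structure of $B^x_\mathbf{n}$ and $B^y_\mathbf{n}$, the $\ast$-algebra axioms, and lower triangularity for the eigenvalues. It is more careful than the paper at one step: the paper simply asserts that $\alpha_m/\beta_{n_1}$ and $\alpha_m/\beta_{n_2}$ can be replaced by their limits, whereas you justify this through the uniform bound $\|B^x_{n_1}\|,\|B^y_{n_2}\|\le 2$ and a norm-vanishing (hence zero-distributed) remainder.

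Your normalization flag is correct and exposes a slip in the paper's proof. The paper takes the diagonal entries of $B^x_\mathbf{n}$ and $B^y_\mathbf{n}$ to be $\delta_0$ and $\sigma_0$, forgetting the factors in \eqref{eq:xB}--\eqref{eq:yB}. The eigenvalues are actually $a(x_i,y_j)\bigl(\tfrac{\alpha_m}{\beta_{n_1}}\delta_0+\tfrac{\alpha_m}{\beta_{n_2}}\sigma_0\bigr)$, so the correct eigenvalue distribution is $a(x,y)(\nu_1\delta_0+\nu_2\sigma_0)$. Since $\delta_0=\sigma_0=1$ (and $a\neq 0$), this agrees with the stated $a(x,y)(\delta_0+\sigma_0)$ exactly when $\nu_1+\nu_2=2$, not only when $\nu_1=\nu_2=1$.
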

\begin{proof}
	From the definition in \eqref{eq:Gm-def}, we have
	\begin{equation*}
		G_\mathbf{n} := D_\mathbf{n}(a) \left(B_{\mathbf{n}}^{x}+B_{\mathbf{n}}^{y}\right).
	\end{equation*}
	Thus, by GLT \ref{glt:symbols}, we deduce that
	\begin{equation*}
		\{D_\mathbf{n}(a)\}_{\mathbf{n}} \sim_\glt a(x,y), \qquad (x,y)\in [0,1]^{2}.
	\end{equation*}
	Then, using \eqref{eq:xB} and assuming $\frac{\alpha_m}{\beta_{n_{1}}}\to \nu_{1}$, by GLT \ref{glt:symbols}, and GLT \ref{glt:kron}, we obtain
	\begin{align*}
		\{B^{x}_{\mathbf{n}}\}_{\mathbf{n}}=\left\{I_{n_{2}} \otimes\frac{\alpha_{m}}{\beta_{n_{1}}} B^{x}_{n_{1}}\right\}_{\mathbf{n}}=\left\{T_{n_{2}}(1) \otimes\frac{\alpha_{m}}{\beta_{n_{1}}} T_{n_{1}(g_{\eta_{1}})}\right\}_{\mathbf{n}}  \sim_{\glt}& \,\nu_{1}g_{\eta_{1}}(\theta_{1})\qquad \theta_{1}\in [-\pi,\pi].
	\end{align*}
	Similarly, using \eqref{eq:yB} and assuming $\frac{\alpha_m}{\beta_{n_{2}}}\to \nu_{2}$, by GLT \ref{glt:symbols}, and GLT \ref{glt:kron}, we obtain
	\begin{align*}
		\{B^{y}_{\mathbf{n}}\}_{\mathbf{n}}=\left\{\frac{\alpha_{m}}{\beta_{n_{2}}} B^{y}_{n_{2}}  \otimes I_{n_{1}}\right\}_{\mathbf{n}}=\left\{\frac{\alpha_{m}}{\beta_{n_{2}}} T_{n_{2}(g_{\eta_{2}})} \otimes T_{n_{1}}(1)\right\}_{\mathbf{n}}  \sim_{\glt}& \,\nu_{2}g_{\eta_{2}}(\theta_{2}),
		\qquad  \theta_{2}\in [-\pi,\pi].
	\end{align*}
	Invoking the $\ast$-algebra properties detailed in GLT \ref{glt:algebra}, we obtain that
	\begin{equation*}
	\{B^{x}_{\mathbf{n}}+B^{y}_{\mathbf{n}}\}_{\mathbf{n}}   \sim_{\glt} \nu_{1}g_{\eta_{1}}(\theta_{1})+\nu_{2}g_{\eta_{2}}(\theta_{2}),
	\qquad  \;(\theta_{1},\, \theta_{2})\in [-\pi,\pi]^{2}.
\end{equation*}
Using the product property of GLT \ref{glt:algebra} applied to the sequences $\{D_\mathbf{n}(a)\}_\mathbf{n}$ and $\{B^{x}_\mathbf{n}+B^{y}_\mathbf{n}\}_\mathbf{n}$, we conclude that
\begin{equation*}
	\{G_\mathbf{n}\}_\mathbf{n}  \sim_{\glt} a(x,y) \left(\nu_{1}g_{\eta_{1}}(\theta_{1})+\nu_{2}g_{\eta_{2}}(\theta_{2})\right),
	\qquad (x,y)\in [0,1]^{2}, \;(\theta_{1},\, \theta_{2})\in [-\pi,\pi]^{2},
\end{equation*}
	By GLT \ref{glt:distrib}, we immediately obtain
		\begin{equation*}
		\{G_\mathbf{n}\}_\mathbf{n}  \sim_{\sigma} a(x,y) \left(\nu_{1}g_{\eta_{1}}(\theta_{1})+\nu_{2}g_{\eta_{2}}(\theta_{2})\right),
		\qquad (x,y)\in [0,1]^{2}, \;(\theta_{1},\, \theta_{2})\in [-\pi,\pi]^{2},
	\end{equation*}
	Finally, we observe that $B^{x}_\mathbf{n}$ and $B^{y}_\mathbf{n}$ are lower triangular matrices. Hence $\left(B^{x}_\mathbf{n} + B^{y}_\mathbf{n}\right)$ is lower triangular, and since $D_{\mathbf{n}}(a)$ is diagonal,
	\begin{equation*}
		G_\mathbf{n} := D_\mathbf{n}(a) \left(B_{\mathbf{n}}^{x}+B_{\mathbf{n}}^{y}\right),
	\end{equation*}
	is also lower diagonal. Therefore, the eigenvalues of $G_\mathbf{n}$ coincide with its diagonal entries.
	Since the main diagonal entries of $B^{x}_\mathbf{n}$ and $B^{y}_\mathbf{n}$ are $\delta_{0}$ and $\sigma_{0}$, the diagonal entries of $G_\mathbf{n}$ are $\{a(x_{i},y_{j})\left(\delta_0+\sigma_{0}\right)\}_{i,j=1}^{n_{1},n_{2}}$.
	Hence
	\begin{equation*}
		\{G_\mathbf{n}\}_\mathbf{n}  \sim_\lambda  a(x,y)\left(\delta_0+\sigma_{0}\right),
		\qquad (x,y)\in [0,1]^{2}.
	\end{equation*}
\end{proof}

We are now in a position to compute the GLT symbol of the sequence associated with the full coefficient matrix $A_{\widetilde{\mathbf{N}}}$. To ensure meaningful results, we assume a mild relation among the rates at which the discretization parameters tend to zero.

\begin{theorem} \label{thm:coeff-mat-sym}
	With $A_{\widetilde{\mathbf{N}}}$ defined in \eqref{eq:coeff-mat}, assume that
	\begin{equation*}
		\frac{\alpha_m}{\beta_{n_{1}}} \to \nu_{1},\quad \frac{\alpha_m}{\beta_{n_{2}}} \to \nu_{2}, \qquad \text{as } m,n_{1},n_{2}\to\infty,
	\end{equation*}
	with $\nu_{1}\neq 0$ or $\nu_{2}\neq 0$. Then, it holds
	\begin{equation*}
		\{A_{\widetilde{\mathbf{N}}}\}_{\widetilde{\mathbf{N}}} \sim_{\glt,\sigma} h_\xi(\theta_1) + a(x,y) \left(\nu_{1}g_{\eta_{1}}(\theta_{2})+\nu_{2}g_{\eta_{2}}(\theta_{3})\right), \quad (\theta_1,\theta_2,\theta_{3})\in [-\pi,\pi]^3, \;\;(x,y)\in [0,1]^{2},
	\end{equation*}
	where $h_\xi$ is given in Theorems \ref{thm:GLT-Bm} and $g_{\eta_{1}}$ and $g_{\eta_{2}}$  are given in Theorems \ref{thm:GLT-Un}.
\end{theorem}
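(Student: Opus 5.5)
The plan is to split $A_{\widetilde{\mathbf{N}}}$ into a dominant part and a remainder that is negligible in the GLT sense. Write
\begin{equation*}
A_{\widetilde{\mathbf{N}}} = \left[ \begin{array}{c|c} U_m \otimes I_\mathbf{n} + I_m\otimes G_\mathbf{n} & \mathrm{O} \\ \hline \mathrm{O} & \mathrm{O}_\mathbf{n} \end{array}\right] + R_{\widetilde{\mathbf{N}}},
\end{equation*}
where $R_{\widetilde{\mathbf{N}}}$ collects the last block column $-\alpha_m\mathbf{q}\otimes I_\mathbf{n}$, the last block row $\mathbf{e}_m^T\otimes I_\mathbf{n}$, and the corner $\frac{\lambda}{\alpha_m}G_\mathbf{n}$. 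All nonzero entries of $R_{\widetilde{\mathbf{N}}}$ lie in the last block row or the last block column. Each of these has width $\nu(\mathbf{n})$, so
\begin{equation*}
\rk(R_{\widetilde{\mathbf{N}}}) \le 2\nu(\mathbf{n}), \qquad \frac{\rk(R_{\widetilde{\mathbf{N}}})}{(m+1)\nu(\mathbf{n})} \le \frac{2}{m+1} \to 0.
\end{equation*}
By the first item of Theorem \ref{lemma:zero-distr}, with $N_n=0$, the sequence $\{R_{\widetilde{\mathbf{N}}}\}$ is zero-distributed. GLT \ref{glt:symbols} then gives it GLT symbol $0$. This argument needs no bound on $\norm{G_\mathbf{n}}$ or on $\lambda/\alpha_m$, because it uses only rank.

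Next, I treat the first diagonal block. This block is the $(m+1)$-block matrix with $U_m\otimes I_\mathbf{n}+I_m\otimes G_\mathbf{n}$ in position $(1,1)$ and zeros elsewhere. Bordering by a zero block row and column changes the matrix only by a low-rank term. Up to a zero-distributed correction, I therefore identify it with $\widetilde U_{m+1}\otimes I_\mathbf{n}+\widetilde I_{m+1}\otimes G_\mathbf{n}$, where $\widetilde U_{m+1}=U_{m+1}$ and $\widetilde I_{m+1}=I_{m+1}$ differ from the bordered matrices by rank at most one. Since $U_{m+1}$ is lower triangular Toeplitz, bordering $U_m$ by one row and column yields $U_{m+1}$ up to a rank-$2$ correction. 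Theorem \ref{thm:GLT-Un} gives $\{U_{m+1}\}\sim_\glt h_\xi(\theta_1)$. Also $I_{m+1}=T_{m+1}(1)$ and $I_\mathbf{n}=T_{n_2}(1)\otimes T_{n_1}(1)$ have symbol $1$. Corollary \ref{cor:distr} gives $\{G_\mathbf{n}\}\sim_\glt a(x,y)(\nu_1 g_{\eta_1}(\theta_2)+\nu_2 g_{\eta_2}(\theta_3))$.

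I then apply GLT \ref{glt:kron} with $r=r'=1$, so no permutations arise. This gives $\{U_{m+1}\otimes I_\mathbf{n}\}\sim_\glt h_\xi(\theta_1)$ and $\{I_{m+1}\otimes G_\mathbf{n}\}\sim_\glt a(x,y)(\nu_1 g_{\eta_1}(\theta_2)+\nu_2 g_{\eta_2}(\theta_3))$. Here the time variable $x_1$ does not appear in the symbol, and I relabel the space variables as $(x,y)$ and the frequencies as $(\theta_2,\theta_3)$. The sum rule in GLT \ref{glt:algebra} then gives the GLT symbol of the main part. Adding the zero-distributed remainders, again by the sum rule, yields
\begin{equation*}
\{A_{\widetilde{\mathbf{N}}}\}\sim_\glt h_\xi(\theta_1)+a(x,y)\left(\nu_1 g_{\eta_1}(\theta_2)+\nu_2 g_{\eta_2}(\theta_3)\right).
\end{equation*}
GLT \ref{glt:distrib} then gives the singular value distribution.

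The main obstacle is bookkeeping rather than analysis. First, the total size is $(m+1)\nu(\mathbf{n})$, not a Kronecker size, so the padding must be checked carefully. Second, the symbol of the Kronecker product lives on $[0,1]^3\times[-\pi,\pi]^3$; since the time variable $x_1$ is absent from the symbol, the statement should be read as a function on that domain, constant in $x_1$. Third, all three indices must go to infinity jointly, which is enforced by the hypotheses $\alpha_m/\beta_{n_i}\to\nu_i$. I would argue that these hypotheses tie $m$, $n_1$, $n_2$ together, so the sequence is a genuine multilevel sequence indexed by $\widetilde{\mathbf{N}}$.
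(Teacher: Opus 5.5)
Your proposal is correct and follows essentially the same route as the paper. The paper splits $A_{\widetilde{\mathbf{N}}}$ into $C^{(1)}_{\widetilde{\mathbf{N}}}+C^{(2)}_{\widetilde{\mathbf{N}}}+C^{(3)}_{\widetilde{\mathbf{N}}}$, shows that these differ from $U_{m+1}\otimes I_\mathbf{n}$, $I_{m+1}\otimes G_\mathbf{n}$ and the zero matrix by corrections of rank $O(\nu(\mathbf{n}))=o(\widetilde{\mathbf{N}})$, and concludes with GLT \ref{glt:symbols}--\ref{glt:kron} and GLT \ref{glt:distrib}. Your only deviation is cosmetic: you gather all corrections (the coupling blocks, the corner $\frac{\lambda}{\alpha_m}G_\mathbf{n}$, and the padding to $U_{m+1}$ and $I_{m+1}$) into terms supported on the last block row and column, which bounds their rank in one step.
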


\begin{proof}
We proceed by decomposing \eqref{eq:coeff-mat} as follows
	\begin{equation*}
		\renewcommand{\arraystretch}{1.5}
		A_{\widetilde{\mathbf{N}}} = \underbrace{ \left[ \begin{array}{cc}
				U_m \otimes I_\mathbf{n} & \mathrm{O}_{m\mathbf{n}\times \mathbf{n}} \\
				\mathrm{O}_{\mathbf{n}\times m\mathbf{n}} & \mathrm{O}_\mathbf{n}
			\end{array} \right] }_{=:\, C_{\widetilde{\mathbf{N}}}^{(1)}}
		+ \underbrace{ \left[ \begin{array}{cc}
				I_m\otimes  G_\mathbf{n} & \mathrm{O}_{m\mathbf{n}\times \mathbf{n}} \\
				\mathrm{O}_{\mathbf{n}\times m\mathbf{n}} & \frac{{\lambda}}{\alpha_m} G_\mathbf{n}
			\end{array} \right] }_{=:\, C_{\widetilde{\mathbf{N}}}^{(2)}}
		+ \underbrace{ \left[ \begin{array}{cc}
				\mathrm{O}_{m\mathbf{n}} & -\alpha_m\mathbf{q}\otimes I_\mathbf{n} \\
				\mathbf{e}_m^T \otimes I_\mathbf{n} & \mathrm{O}_\mathbf{n}
			\end{array} \right] }_{=:\, C_{\widetilde{\mathbf{N}}}^{(3)}}.
	\end{equation*}
	Thus, we analyze each component separately. For the first component, we have
	\begin{equation*}
		\renewcommand{\arraystretch}{1.3}
		C_{\widetilde{\mathbf{N}}}^{(1)}
		= U_{m+1} \otimes I_\mathbf{n}
		- \underbrace{\left[ \begin{array}{c|c}
				\mathrm{O}_{m\mathbf{n}} & \mathrm{O}_{m\mathbf{n}\times \mathbf{n}} \\
				\hline
				\begin{matrix}
					\gamma_m e^{-\rho m\Delta t} I_\mathbf{n} & \cdots & \gamma_1 e^{-\rho\Delta t} I_\mathbf{n}
				\end{matrix} & \gamma_0 I_m
			\end{array} \right]}_{=: R^{(1)}_{\widetilde{\mathbf{N}}}},
	\end{equation*}
	where $\rk R^{(1)}_{\widetilde{\mathbf{N}}} \leq \nu(\mathbf{n}) = o({\widetilde{\mathbf{N}}})$ as $m,\mathbf{n}\to\infty$. According to Theorem \ref{lemma:zero-distr}, the sequence $\{R^{(1)}_{\widetilde{\mathbf{N}}}\}_{\widetilde{\mathbf{N}}}$ is zero-distributed; hence, by GLT \ref{glt:symbols}, we deduce that
	\begin{equation*}
		\{R^{(1)}_{\widetilde{\mathbf{N}}}\}_{\widetilde{\mathbf{N}}}\sim_\glt 0.
	\end{equation*}
	Moreover, exploiting the identity $U_{m+1} \otimes I_\mathbf{n}= T_{m+1}(h_\xi) \otimes T_\mathbf{n}(1)$ and applying the axioms GLT \ref{glt:symbols}--\ref{glt:kron} yields
	\begin{equation} \label{eq:S1}
		\big\{C_{\widetilde{\mathbf{N}}}^{(1)}\big\}_{\widetilde{\mathbf{N}}} \sim_\glt h_\xi(\theta_1).
	\end{equation}
	Regarding $C_{\widetilde{\mathbf{N}}}^{(2)}$, we observe that
	\begin{equation*}
		C_{\widetilde{\mathbf{N}}}^{(2)} = \left[ \begin{array}{cc}
			I_m\otimes  G_\mathbf{n} & \mathrm{O}_{m\mathbf{n}\times\mathbf{n}} \\
			\mathrm{O}_{\mathbf{n}\times m\mathbf{n}} &  G_\mathbf{n} + \left(\frac{\lambda}{\alpha_m}-1\right) G_\mathbf{n}
		\end{array} \right]
		= I_{m+1} \otimes   G_\mathbf{n} +
		\underbrace{\left[ \begin{array}{cc}
				\mathrm{O}_{m\mathbf{n}} & \mathrm{O}_{m\mathbf{n}\times \mathbf{n}} \\
				\mathrm{O}_{\mathbf{n}\times m\mathbf{n}} & \left(\frac{\lambda}{\alpha_m}-1\right) G_\mathbf{n}
			\end{array} \right]}_{=: R_{\widetilde{\mathbf{N}}}^{(2)}},
	\end{equation*}
	where, once again, $\rk R_{\widetilde{\mathbf{N}}}^{(2)} = \nu(\mathbf{n}) = o(\widetilde{\mathbf{N}})$ as $m,\mathbf{n}\to\infty$. By Theorem \ref{lemma:zero-distr}, the sequence $\{R^{(2)}_{\widetilde{\mathbf{N}}}\}_{\widetilde{\mathbf{N}}}$ is zero-distributed; therefore, by GLT \ref{glt:symbols}, it follows that
	\begin{equation*}
		\{R_{\widetilde{\mathbf{N}}}^{(2)}\}_{\widetilde{\mathbf{N}}}\sim_\glt 0.
	\end{equation*}
	Invoking the assumptions $\frac{\alpha_m}{\beta_{n_1}}\to \nu_{1}$ and $\frac{\alpha_m}{\beta_{n_2}}\to \nu_{2}$ together with GLT \ref{glt:symbols} yields
	\begin{equation*}
		\left\{I_{m+1} \otimes G_\mathbf{n}\right\}_{\widetilde{\mathbf{N}}} =\left\{I_{m+1} \otimes  D_{\mathbf{n}}(a)\left(B_{\mathbf{n}}^{x}+B_{\mathbf{n}}^{y}\right)\right\}_{\widetilde{\mathbf{N}}}= \left\{ T_{m+1}(1) \otimes  D_\mathbf{n}(a)\left(\nu_{1}T_\mathbf{n}(g_{\eta_{1}})+\nu_{2}T_\mathbf{n}(g_{\eta_{2}})\right) \right\}_{\widetilde{\mathbf{N}}} .
	\end{equation*}
	Then, by GLT \ref{glt:kron}, we obtain
	\begin{equation*}
		\left\{I_{m+1} \otimes G_\mathbf{n}\right\}_{\widetilde{\mathbf{N}}}\sim_\glt a(x,y) \left(\nu_{1}g_{\eta_{1}}(\theta_{2})+\nu_{2}g_{\eta_{2}}(\theta_{3})\right).
	\end{equation*}
	By GLT \ref{glt:algebra}, we conclude that
	\begin{equation} \label{eq:S2}
		\big\{C_{\widetilde{\mathbf{N}}}^{(2)}\big\}_{\widetilde{\mathbf{N}}} \sim_\glt a(x,y) \left(\nu_{1}g_{\eta_{1}}(\theta_{2})+\nu_{2}g_{\eta_{2}}(\theta_{3})\right).
	\end{equation}
	We conclude by observing that
	\begin{equation*}
		C_{\widetilde{\mathbf{N}}}^{(3)} = \left[ \begin{array}{cc}
			\mathrm{O}_{m\mathbf{n}} & -\alpha_m\mathbf{q}\otimes I_\mathbf{n} \\
			\mathrm{O}_{\mathbf{n}\times m\mathbf{n}} & \mathrm{O}_\mathbf{n}
		\end{array} \right]
		+ \left[ \begin{array}{cc}
			\mathrm{O}_{m\mathbf{n}} & \mathrm{O}_{m\mathbf{n}\times \mathbf{n}} \\
			\mathbf{e}_{m}^T \otimes I_\mathbf{n} & \mathrm{O}_\mathbf{n}
		\end{array} \right],
	\end{equation*}
	where each component has rank at most $\mathbf{n}=o({\widetilde{\mathbf{N}}})$. Hence, we conclude that
	\begin{equation} \label{eq:S3}
		\big\{C_{\widetilde{\mathbf{N}}}^{(3)}\big\}_{\widetilde{\mathbf{N}}} \sim_\glt 0.
	\end{equation}
	By combining the contributions in \eqref{eq:S1}, \eqref{eq:S2}, and \eqref{eq:S3}, and invoking GLT \ref{glt:algebra}, we obtain the GLT symbol of the matrix sequence ${A_{\widetilde{\mathbf{N}}}}_{\widetilde{\mathbf{N}}}$
	\begin{equation*}
		\{A_{\widetilde{\mathbf{N}}}\}_{\widetilde{\mathbf{N}}} \sim_{\glt} h_\xi(\theta_1) + a(x,y) \left(\nu_{1}g_{\eta_{1}}(\theta_{2})+\nu_{2}g_{\eta_{2}}(\theta_{3})\right), \quad (\theta_1,\theta_2,\theta_{3})\in [-\pi,\pi]^3, \;\;(x,y)\in [0,1]^{2}.
	\end{equation*}
By GLT \ref{glt:distrib}, the singular value distribution is
	\begin{equation*}
	\{A_{\widetilde{\mathbf{N}}}\}_{\widetilde{\mathbf{N}}} \sim_{\sigma} h_\xi(\theta_1) + a(x,y) \left(\nu_{1}g_{\eta_{1}}(\theta_{2})+\nu_{2}g_{\eta_{2}}(\theta_{3})\right), \quad (\theta_1,\theta_2,\theta_{3})\in [-\pi,\pi]^3, \;\;(x,y)\in [0,1]^{2},
\end{equation*}
\end{proof}

\begin{Remark} \label{remark:dimensional parameters}
The assumption $\nu_{1}\neq 0$ or $\nu_{2}\neq 0$ used in Corollary \ref{cor:distr} and in Theorem \ref{thm:coeff-mat-sym} deserves a bit of attention and few comments since it is used also in the study of the preconditioning; see Theorem \ref{thm:precond-sym}. In fact, if we assume $n_1\sim n_2$ i.e.
$cn_1\le n_2 \le Cn_1$ with $c, C$ universal constants independent of $n_1,n_2$, the we find the following cases:
\begin{itemize}
\item when $\eta_{1}> \eta_2$, taking into account the expression in (\ref{eq:beta-n1}), (\ref{eq:beta-n2}), the given assumption implies
$\nu_{1}> 0$ and $\nu_{2}= 0$;
\item when $\eta_{2}> \eta_1$, taking into account the expression in (\ref{eq:beta-n1}), (\ref{eq:beta-n2}), the given assumption implies
$\nu_{1}= 0$ and $\nu_{2}> 0$;
\item when $\eta_{2}= \eta_1$, taking into account the expression in (\ref{eq:beta-n1}), (\ref{eq:beta-n2}), the given assumption implies
$\nu_{1}> 0$ and $\nu_{2}> 0$ with $\nu_{1}=\nu_{2}> 0$ if $n_2=n_1+o(n_1)$;
\end{itemize}
Finally, it is worth observing that when $n_1\sim n_2$ is violated, the picture becomes more complicate, but it is also a degenerate case since it implies that one space direction is approximated with much more precision than the other.
\end{Remark}

\section{GLT preconditioning} \label{sec:precond}

The current section is divided into two parts. The first contains the proposal and the second the related spectral analysis.

\subsection{Block Triangular preconditioning}
In this subsection, we propose block triangular preconditioners $P_{\widetilde{\mathbf{N}}}$ for the linear system \eqref{eq:lin-syst}, aiming to cluster the eigenvalues of $\{ P_{\widetilde{\mathbf{N}}}^{-1} A_{\widetilde{\mathbf{N}}} \}_{\widetilde{\mathbf{N}}} $ around $1$. We define
\begin{equation} \label{eq:precond}
	\renewcommand{\arraystretch}{1.5}
	P_{\widetilde{\mathbf{N}}} := \left[ \begin{array}{c|c}
		U_m \otimes I_\mathbf{n} + I_m\otimes G_\mathbf{n} & \mathrm{O}_{\mathbf{n}m \times \mathbf{n}} \\
		\hline
		\mathbf{e}_m^T \otimes I_\mathbf{n} & \frac{{\lambda}}{\alpha_{m}} G_\mathbf{n}
	\end{array} \right].
\end{equation}
Note that both $U_m$ and $G_\mathbf{n}$ are invertible, since they are lower triangular matrices with nonzero diagonal entries. The same holds for $U_m \otimes I_\mathbf{n} + I_m \otimes G_\mathbf{n}$, which is also block lower triangular. Moreover, due to the block triangular structure, the eigenvalues of $P_{\widetilde{\mathbf{N}}}$ coincide with those of its diagonal blocks. Therefore, $P_{\widetilde{\mathbf{N}}}$ is invertible.

The linear systems with matrix $P_{\widetilde{\mathbf{N}}}$ that have to be solved when applying the preconditioned GMRES to the original linear systems can be solved by standard block forward elimination. However we have to specify how the linear systems having as coefficient matrices the diagonal blocks of $P_{\widetilde{\mathbf{N}}}$ are solved. Indeed the diagonal blocks have the expression $\gamma_0 I_{\mathbf{n}} + D_{\mathbf{n}}(a) B_{\mathbf{n}}$ and for them we can use several standard preconditiones such as $\gamma_0 I_{\mathbf{n}} + \hat a B_{\mathbf{n}}$ which is of Toeplitz lower triangular form, with $\hat a$ being the integral average of the function $a$, or the standard circulant preconditioner constructed as $\gamma_0 I_{\mathbf{n}} + \hat a S(B_{\mathbf{n}})$, $S(B_{\mathbf{n}})$ being a Strang-like circulant approximation of $B_{\mathbf{n}}$, constructed by preserving the first column of $B_{\mathbf{n}}$. The computational features of these choice are discussed at the end of Section \ref{sec:num}.

\subsection{GLT preconditioning analysis}
The following theorem establishes that $P_{\widetilde{\mathbf{N}}}$ has the same GLT symbol, and consequently the same singular value distribution, as the original coefficient matrix sequence.

\begin{theorem} \label{thm:precond-sym}
	With $P_{\widetilde{\mathbf{N}}}$ defined in \eqref{eq:precond}, assume that
	\begin{equation*}
		\frac{\alpha_m}{\beta_{n_{1}}} \to \nu_{1},\quad \frac{\alpha_m}{\beta_{n_{2}}} \to \nu_{2}, \qquad \text{as } m,n_{1},n_{2}\to\infty,
	\end{equation*}
	with $\nu_{1}\neq 0$ or $\nu_{2}\neq 0$. Then, it holds
	\begin{equation*}
		\{P_{\widetilde{\mathbf{N}}}\}_{\widetilde{\mathbf{N}}} \sim_{\glt,\sigma} h_\xi(\theta_1) + a(x,y) \left(\nu_{1}g_{\eta_{1}}(\theta_{2})+\nu_{2}g_{\eta_{2}}(\theta_{3})\right), \quad (\theta_1,\theta_2,\theta_{3})\in [-\pi,\pi]^3, \;\;(x,y)\in [0,1]^{2},
	\end{equation*}
	where $h_\xi$ is given in Theorems \ref{thm:GLT-Bm} and $g_{\eta_{1}}$ and $g_{\eta_{2}}$  are given in Theorems \ref{thm:GLT-Un}.
\end{theorem}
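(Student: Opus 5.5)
The natural route is to compare $P_{\widetilde{\mathbf{N}}}$ directly with $A_{\widetilde{\mathbf{N}}}$ rather than repeat the decomposition argument. From \eqref{eq:coeff-mat} and \eqref{eq:precond}, the two matrices differ only in the upper right block:
\begin{equation*}
	A_{\widetilde{\mathbf{N}}} - P_{\widetilde{\mathbf{N}}} = \left[ \begin{array}{cc}
		\mathrm{O}_{m\mathbf{n}} & -\alpha_m\mathbf{q}\otimes I_\mathbf{n} \\
		\mathrm{O}_{\mathbf{n}\times m\mathbf{n}} & \mathrm{O}_\mathbf{n}
	\end{array} \right] =: E_{\widetilde{\mathbf{N}}}.
\end{equation*}
Its nonzero columns are the last $\nu(\mathbf{n})$ columns, so $\rk E_{\widetilde{\mathbf{N}}} \le \nu(\mathbf{n})$. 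Since $\widetilde{\mathbf{N}} = (m+1)\nu(\mathbf{n})$ and $m\to\infty$, we get $\rk E_{\widetilde{\mathbf{N}}}/\widetilde{\mathbf{N}} \le 1/(m+1) \to 0$.

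By the first item of Theorem \ref{lemma:zero-distr} (with $N_n = 0$), $\{E_{\widetilde{\mathbf{N}}}\}_{\widetilde{\mathbf{N}}}$ is zero-distributed. The third item of GLT \ref{glt:symbols} then gives $\{E_{\widetilde{\mathbf{N}}}\}_{\widetilde{\mathbf{N}}}\sim_\glt 0$. This is the same observation already used for the first summand of $C^{(3)}_{\widetilde{\mathbf{N}}}$ in the proof of Theorem \ref{thm:coeff-mat-sym}. Notably, no bound on $\alpha_m\mathbf{q}$ is needed, because only the rank is used.

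Next, write $P_{\widetilde{\mathbf{N}}} = A_{\widetilde{\mathbf{N}}} - E_{\widetilde{\mathbf{N}}}$. Theorem \ref{thm:coeff-mat-sym}, whose hypotheses coincide with those assumed here, gives the GLT symbol of $\{A_{\widetilde{\mathbf{N}}}\}_{\widetilde{\mathbf{N}}}$. The linearity in GLT \ref{glt:algebra} then yields
\begin{equation*}
	\{P_{\widetilde{\mathbf{N}}}\}_{\widetilde{\mathbf{N}}} \sim_\glt h_\xi(\theta_1) + a(x,y) \left(\nu_{1}g_{\eta_{1}}(\theta_{2})+\nu_{2}g_{\eta_{2}}(\theta_{3})\right) - 0.
\end{equation*}
Finally, GLT \ref{glt:distrib} turns this into the singular value distribution $\sim_\sigma$, which completes the statement $\sim_{\glt,\sigma}$.

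As an alternative serving as a consistency check, one could redo the three-term splitting of the proof of Theorem \ref{thm:coeff-mat-sym} with $C^{(3)}_{\widetilde{\mathbf{N}}}$ replaced by its lower-left part only; the conclusion is identical.

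There is no real obstacle here. The only point requiring care is that the perturbation must be small relative to the total size $\widetilde{\mathbf{N}}$. This holds because $m\to\infty$ is part of the asymptotic regime $m,n_1,n_2\to\infty$; if $m$ were kept bounded, the block $E_{\widetilde{\mathbf{N}}}$ would not be zero-distributed.
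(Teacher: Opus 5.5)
Your proof is correct, but your main argument follows a slightly different route from the paper's. The paper's one-line proof says the result follows from the same GLT arguments used for Theorem \ref{thm:coeff-mat-sym}. That is, one repeats the splitting into $C^{(1)}_{\widetilde{\mathbf{N}}}$, $C^{(2)}_{\widetilde{\mathbf{N}}}$, $C^{(3)}_{\widetilde{\mathbf{N}}}$, with $C^{(3)}_{\widetilde{\mathbf{N}}}$ now containing only the lower-left block $\mathbf{e}_m^T\otimes I_\mathbf{n}$. This is exactly your ``consistency check.''

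Your primary argument instead takes the symbol of $\{A_{\widetilde{\mathbf{N}}}\}_{\widetilde{\mathbf{N}}}$ as already established. You then note that $A_{\widetilde{\mathbf{N}}}-P_{\widetilde{\mathbf{N}}}$ lives in the last block column, so its rank is at most $\nu(\mathbf{n})=\widetilde{\mathbf{N}}/(m+1)$. The paper itself uses this same rank fact in the next theorem on $P_{\widetilde{\mathbf{N}}}^{-1}A_{\widetilde{\mathbf{N}}}$.

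Your route gains brevity and transparency. It shows the statement is just Theorem \ref{thm:coeff-mat-sym} up to a zero-distributed correction. Beyond invoking that theorem under identical hypotheses, you never revisit $U_m$, $G_\mathbf{n}$, the Kronecker axiom, or the scalings $\alpha_m/\beta_{n_i}\to\nu_i$. You are also right that no norm control on $\alpha_m\mathbf{q}$ is needed.

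The paper's route is self-contained at the level of the basic building blocks (Toeplitz, diagonal sampling, and tensor products) rather than resting on the previous theorem. It gains no extra generality, though. Both arguments hinge on the same rank-$o(\widetilde{\mathbf{N}})$ observations, and both genuinely require $m\to\infty$, as you correctly point out.
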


\begin{proof}
	The result follows from the same GLT arguments used in the proof of Theorem \ref{thm:coeff-mat-sym}.
\end{proof}

The GLT symbol and the singular value distribution of the preconditioned matrix sequence follow as a corollary of Theorems \ref{thm:coeff-mat-sym} and \ref{thm:precond-sym}, by exploiting the algebra properties in GLT \ref{glt:algebra}. However, they can also be derived in a more general setting, without assuming specific conditions on the discretization parameters, as shown in the following theorem. Moreover, the eigenvalue distribution can be characterized.

\begin{theorem} 
	With $A_{\widetilde{\mathbf{N}}}$ as in \eqref{eq:coeff-mat} and $P_{\widetilde{\mathbf{N}}}$ as in \eqref{eq:precond}, it holds
	\begin{equation*}
		\{P_{\widetilde{\mathbf{N}}}^{-1}A_{\widetilde{\mathbf{N}}}\}_{\widetilde{\mathbf{N}}} \sim_{\glt,\sigma} 1.
	\end{equation*}
	Moreover,
	\begin{equation*}
		\{L_{\widetilde{\mathbf{N}}}^{-1}A_{\widetilde{\mathbf{N}}}\}_{\widetilde{\mathbf{N}}} \sim_\lambda 1.
	\end{equation*}
\end{theorem}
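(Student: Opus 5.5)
The approach is to compute $P_{\widetilde{\mathbf{N}}}^{-1}A_{\widetilde{\mathbf{N}}}$ explicitly and show it is a low-rank perturbation of the identity. This gives the GLT and singular value statements without any assumption on $\alpha_m/\beta_{n_1},\alpha_m/\beta_{n_2}$, and it also yields the eigenvalue statement. I read the $L_{\widetilde{\mathbf{N}}}$ in the last claim as $P_{\widetilde{\mathbf{N}}}$.

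Comparing \eqref{eq:coeff-mat} with \eqref{eq:precond}, the two matrices differ only in the upper-right block:
\begin{equation*}
A_{\widetilde{\mathbf{N}}} = P_{\widetilde{\mathbf{N}}} + E_{\widetilde{\mathbf{N}}},\qquad
E_{\widetilde{\mathbf{N}}} := \left[ \begin{array}{cc}
\mathrm{O}_{m\mathbf{n}} & -\alpha_m\mathbf{q}\otimes I_\mathbf{n} \\
\mathrm{O}_{\mathbf{n}\times m\mathbf{n}} & \mathrm{O}_\mathbf{n}
\end{array} \right].
\end{equation*}
The nonzero part of $E_{\widetilde{\mathbf{N}}}$ is confined to the last $\nu(\mathbf{n})$ columns, so $\rk E_{\widetilde{\mathbf{N}}}\le \nu(\mathbf{n})$. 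The invertibility of $P_{\widetilde{\mathbf{N}}}$ was already established above: its diagonal blocks are lower triangular with nonzero diagonal, using $a\neq0$. Hence
\begin{equation*}
P_{\widetilde{\mathbf{N}}}^{-1}A_{\widetilde{\mathbf{N}}} = I_{\widetilde{\mathbf{N}}} + P_{\widetilde{\mathbf{N}}}^{-1}E_{\widetilde{\mathbf{N}}},
\end{equation*}
with $\rk\big(P_{\widetilde{\mathbf{N}}}^{-1}E_{\widetilde{\mathbf{N}}}\big)\le \nu(\mathbf{n})$.

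Since $\widetilde{\mathbf{N}}=(m+1)\nu(\mathbf{n})$, the ratio of this rank to the matrix size is at most $1/(m+1)$, which tends to $0$ as $m\to\infty$. The first item of Theorem \ref{lemma:zero-distr}, with $N_n=0$, then shows that $\{P_{\widetilde{\mathbf{N}}}^{-1}E_{\widetilde{\mathbf{N}}}\}$ is zero-distributed, so by GLT \ref{glt:symbols} it is GLT with symbol $0$. The identity is $T(1)$ and has GLT symbol $1$. GLT \ref{glt:algebra} gives symbol $1$ for the sum, and GLT \ref{glt:distrib} gives $\sim_\sigma 1$. None of this uses the hypotheses on $\nu_1,\nu_2$.

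For the eigenvalue claim, the block structure is the key point. Write $K:=U_m\otimes I_\mathbf{n}+I_m\otimes G_\mathbf{n}$ and $S:=\frac{\lambda}{\alpha_m}G_\mathbf{n}$. Block forward elimination gives
\begin{equation*}
P_{\widetilde{\mathbf{N}}}^{-1}E_{\widetilde{\mathbf{N}}}=\left[\begin{array}{cc}\mathrm{O}_{m\mathbf{n}} & X\\ \mathrm{O}_{\mathbf{n}\times m\mathbf{n}} & Y\end{array}\right],
\end{equation*}
where $X=-\alpha_m K^{-1}(\mathbf{q}\otimes I_\mathbf{n})$ and $Y=-S^{-1}(\mathbf{e}_m^T\otimes I_\mathbf{n})X$. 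Therefore $P_{\widetilde{\mathbf{N}}}^{-1}A_{\widetilde{\mathbf{N}}}$ is block upper triangular with diagonal blocks $I_{m\mathbf{n}}$ and $I_\mathbf{n}+Y$. At least $m\nu(\mathbf{n})$ of its eigenvalues are exactly $1$, and at most $\nu(\mathbf{n})=o(\widetilde{\mathbf{N}})$ can differ from $1$. This is weak clustering at $\{1\}$, and by Remark \ref{remark:distrib-cluster} it is equivalent to $\sim_\lambda 1$.

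The only real obstacle is one of interpretation rather than computation. The number of possible outliers is $\nu(\mathbf{n})$, which grows, so we obtain weak and not strong clustering, and we need $m\to\infty$ for the ratio $1/(m+1)$ to vanish. The argument also relies on $P_{\widetilde{\mathbf{N}}}$ being invertible for every index, which has already been shown.
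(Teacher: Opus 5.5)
Your proof is correct and follows the paper's argument. Both write $A_{\widetilde{\mathbf{N}}}=P_{\widetilde{\mathbf{N}}}+R_{\widetilde{\mathbf{N}}}$, where $R_{\widetilde{\mathbf{N}}}$ is the upper-right block, of rank at most $\nu(\mathbf{n})=o(\widetilde{\mathbf{N}})$. Both then conclude that $\{P_{\widetilde{\mathbf{N}}}^{-1}R_{\widetilde{\mathbf{N}}}\}$ is zero-distributed (hence GLT $0$) and count at most $\nu(\mathbf{n})$ eigenvalues different from $1$.

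Your explicit block upper triangular form $\left[\begin{smallmatrix} I & X\\ \mathrm{O} & I+Y\end{smallmatrix}\right]$ is a more concrete justification of the paper's step $\lambda(P_{\widetilde{\mathbf{N}}}^{-1}A_{\widetilde{\mathbf{N}}})=1+\lambda(P_{\widetilde{\mathbf{N}}}^{-1}R_{\widetilde{\mathbf{N}}})$ with a rank-$\nu(\mathbf{n})$ term. It also identifies the possible outliers as the eigenvalues of $I_{\mathbf{n}}+Y$.

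One minor caveat, which the paper shares: the diagonal blocks $\gamma_0 I_{\mathbf{n}}+G_{\mathbf{n}}$ are invertible only if $1+a_{i,j}\bigl(\tfrac{\alpha_m}{\beta_{n_1}}+\tfrac{\alpha_m}{\beta_{n_2}}\bigr)\neq 0$. This holds, for example, when $a>0$, but $a\neq 0$ alone does not guarantee it.
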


\begin{proof}
	Define $R_{\widetilde{\mathbf{N}}} := A_{\widetilde{\mathbf{N}}} - P_{\widetilde{\mathbf{N}}}$. Then,
	\begin{equation*}
		P_{\widetilde{\mathbf{N}}}^{-1} A_{\widetilde{\mathbf{N}}} = I_{\widetilde{\mathbf{N}}} + P_{\widetilde{\mathbf{N}}}^{-1}R_{\widetilde{\mathbf{N}}}.
	\end{equation*}
	Since $\rk R_{\widetilde{\mathbf{N}}}\leq \nu(\mathbf{n})$, it follows that $\rk(P_{\widetilde{\mathbf{N}}}^{-1}R_{\widetilde{\mathbf{N}}}) \leq \nu(\mathbf{n}) = o({\widetilde{\mathbf{N}}})$ as $m,\mathbf{n}\to\infty$. By Theorem \ref{lemma:zero-distr}, the sequence $\{P_{\widetilde{\mathbf{N}}}^{-1}R_{\widetilde{\mathbf{N}}}\}_{\widetilde{\mathbf{N}}}$ is zero-distributed; therefore, by GLT \ref{glt:symbols}, it follows that
	\begin{equation*}
		\{P_{\widetilde{\mathbf{N}}}^{-1}R_{\widetilde{\mathbf{N}}}\}_{\widetilde{\mathbf{N}}} \sim_\glt 0.
	\end{equation*}
	Hence, by exploiting part 2 of axiom GLT \ref{glt:algebra} and by the fact that $\{I_{\widetilde{\mathbf{N}}}\}_{\widetilde{\mathbf{N}}} \sim_\glt 1$, we infer
	\begin{equation*}
		\{P_{\widetilde{\mathbf{N}}}^{-1} A_{\widetilde{\mathbf{N}}}\}_{\widetilde{\mathbf{N}}} \sim_\glt 1.
	\end{equation*}
	Thus, by GLT \ref{glt:distrib}, we immediately obtain
	\begin{equation*}
		\{P_{\widetilde{\mathbf{N}}}^{-1} A_{\widetilde{\mathbf{N}}}\}_{\widetilde{\mathbf{N}}} \sim_\sigma 1.
	\end{equation*}
	Turning to the eigenvalue distribution, we observe that
	\begin{equation*}
		\lambda_n (P_{\widetilde{\mathbf{N}}}^{ -1 } A_{\widetilde{\mathbf{N}}} ) = 1 + \lambda_n (P_{\widetilde{\mathbf{N}}}^{-1} R_{\widetilde{\mathbf{N}}}),
		\qquad n=1,\ldots,{\widetilde{\mathbf{N}}},
	\end{equation*}
	therefore
	\begin{equation*}
		\# \left\{ n=1,\ldots,{\widetilde{\mathbf{N}}}  \;\colon\; \lambda_n (P_{\widetilde{\mathbf{N}}}^{-1} A_{\widetilde{\mathbf{N}}}) \neq 1 \right\} \leq \mathbf{n} = o({\widetilde{\mathbf{N}}}).
	\end{equation*}
	Thus, the eigenvalues of $\{ P_{\widetilde{\mathbf{N}}}^{-1} A_{\widetilde{\mathbf{N}}} \}_{\widetilde{\mathbf{N}}}$ are weakly clustered at $1$ in the sense of Definition \ref{clustering}, which, by Remark \ref{remark:distrib-cluster}, is equivalent to spectral distribution as $1$.
\end{proof}

\begin{Remark} \label{remark:future directions}
We emphasize that the given sequence of theoretical steps for studying the distribution and clustering in the eigenvalue and singular value sense can be adapted with little effort to a great variety of different situations. Here we mention two of them, one concerning a generalization of problem \eqref{eq:problem} and one regarding other classes of discretizations for the regularized FDEs in \eqref{eq:quasi-problem}.
\begin{enumerate}
	\item If the square $[0,1]\times [0,1]$ is replaced by a bounded $\Omega\subset \mathbb{R}^2$, with $\Omega$ simply Peano-Jordan measurable, then the domain $\Omega_T$ in \eqref{eq:problem}--\eqref{eq:quasi-problem} is replaced by $\Omega\times (0,T)$. Hence, simply adapting the steps in Section \ref{sec:glt-analysis} and Section \ref{sec:precond}, the same conclusions are obtained, using the reduced GLT theory \cite{reduced} in place of the classical one; for the reduced GLT theory refer to \cite{serra2003}[pp. 395–399] and \cite{serra2006}[Section 3.1.4] for an example, the initial proposal, the related terminology, and \cite{reduced} for a complete theoretical development.
	\item If the discretization formulae used in Section \ref{sec:discre} are replaced by high order finite elements in space of order $p>1$ or by the isogeometric analysis with degree $p$ and regularity $k<p-1$, then we would end up with block multilevel matrix sequences, where with reference to Section \ref{sec:prelim}, the dimensionality $r$ of the GLT symbol would by either $p^d$ or $(p-k)^d$ with $d=2$. Again the analysis given here can be performed almost verbatim with minimal technical changes, thanks to the uniform structure of the GLT axioms.
\end{enumerate}
Of course the directions indicated in 1) and 2) can be combined and even extended to higher space dimensionality with $\Omega\subset \mathbb{R}^d$, $d\ge 2$, so having the potential of providing a very general picture.
\end{Remark}

\section{Numerical results and study of the computational cost}\label{sec:num}
We evaluate the efficiency of the proposed preconditioner by measuring the number of GMRES iterations required for convergence.
Let us consider the two-dimensional problem on the domain
\(\Omega = (0,1)^2\) and \(t \in (0,T)\). We set
\[
T = 1, \qquad a(x,y) = x + y, \qquad q(t) = t^2, \qquad \varphi(x,y) = 0.
\]
To construct the noisy final data, we select the source function
\[
f(x,y) = x y \sin(\pi x)\sin(\pi y).
\]
and solve the linear system arising from the corresponding direct problem
\begin{equation*}
	\begin{dcases} {}^{T}D_{t}^{\xi,\rho}u(x,y,t)+ a(x,y)\left({}^{C}D_{x}^{\eta_{1}}+{}^{C}D_{y}^{\eta_{2}}\right)u(x,y,t)=q(t)f(x,y), & (x,y,t) \in \Omega_{T},\\	u(0,y,t)=0=u(1,y,t), & y \in (0,1), \quad t\in(0,T),\\
		u(x,0,t)=0=u(x,1,t), & x \in (0,1), \quad t\in(0,T),\\
		u(x,y,0) = \varphi(x,y), & x \in (0,1), \quad y \in (0,1).\end{dcases}\end{equation*}
With the same discretization technique adopted in Section \ref{sec:discre}, we obtain the linear system
\begin{equation*}
	\tilde{A}_{m,\mathbf{n}}\tilde{\mathbf{u}} = \tilde{\mathbf{b}},
\end{equation*}
where
\begin{gather*}
	\renewcommand{\arraystretch}{1.5}
	\tilde{A}_{m,\mathbf{n}} := \left[ \begin{array}{cccc}
		\gamma_0 I_\mathbf{n} +  G_\mathbf{n} & \mathrm{O}_\mathbf{n} & \cdots & \mathrm{O}_\mathbf{n} \\
		\gamma_1 e^{-\rho \Delta t} I_\mathbf{n} & \gamma_0 I_\mathbf{n} + G_\mathbf{n} & \ddots & \vdots \\
		\vdots & \ddots & \ddots & \mathrm{O}_m \\
		\gamma_{m-1} e^{-(m-1)\rho\Delta t} I_\mathbf{n} & \cdots & \gamma_1 e^{-\rho \Delta t} I_\mathbf{n} & \gamma_0 I_\mathbf{n} +  G_\mathbf{n}
	\end{array} \right]
	\in \C^{m\mathbf{n}\times m\mathbf{n}},
	\\
	\renewcommand{\arraystretch}{1.5}
	\tilde{\mathbf{u}} :=
	\left[ \begin{array}{c}
		\mathbf{u} \\
		\mathbf{u} \\
		\vdots \\
		\mathbf{u}
	\end{array} \right]
	\in\C^{m\mathbf{n}},
	\qquad
	\tilde{\mathbf{b}} :=
	\left[ \begin{array}{c}
		b_0 e^{-\rho\Delta t} \boldsymbol{\varphi} + \alpha_m q^{1} \mathbf{f} \\
		b_1 e^{-2\rho\Delta t} \boldsymbol{\varphi} + \alpha_m q^{2} \mathbf{f} \\
		\vdots \\
		b_{m-1} e^{-m\rho\Delta t} \boldsymbol{\varphi} + \alpha_m q^{m} \mathbf{f}
	\end{array} \right]
	\in\C^{m\mathbf{n}}.
\end{gather*}
The final time datum $\mathbf{u}^{N}$ is obtained by solving the forward problem. To model measurement noise, we introduce a perturbation of the form $\varepsilon \,\delta(x)$, where $\delta(x)$ is a random function uniformly distributed in $(-1,1)$. The resulting noisy datum is denoted by $\boldsymbol{\phi}$, representing a contaminated version of the exact final time observation.

Then, we apply the GMRES method to \eqref{eq:lin-syst} and report in Table \ref{table:gmres} the iteration counts and CPU times required by both the nonpreconditioned and preconditioned systems. We set the regularization parameter to $\lambda=5\cdot10^{-3}$, the noise level to $\varepsilon=0.01$, and consider different choices of the fractional orders $\xi$, $\eta_1$, and $\eta_2$. The initial guess is the zero vector, the tolerance is $\texttt{tol}=10^{-8}$, and the maximum number of iterations is equal to the size of the matrix.

Comparing the results obtained for the preconditioned and nonpreconditioned systems, we observe that the preconditioner $P_N$ consistently reduces the number of GMRES iterations for all tested values of the fractional orders. The reduction is particularly significant for $\xi=0.2$, $\eta_1=0.2$, and $\eta_2=0.8$, where the iteration counts of the nonpreconditioned system are the largest. For example, when $n_1=n_2=m=2^5$, the number of iterations decreases from $186$ to $39$, corresponding to a reduction of almost $80\%$. These results confirm the effectiveness of the proposed preconditioning strategy and are in agreement with the theoretical spectral analysis.

Furthermore, the preconditioner exhibits a remarkable robustness with respect to the problem dimensions. Indeed, for each fixed value of $n_1=n_2$, the number of iterations required by the preconditioned system remains nearly constant as $m$ increases. For instance, when $n_1=n_2=2^5$ and $(\xi,\eta_1,\eta_2)=(0.2,0.2,0.8)$, the preconditioned iteration counts vary only between $39$ and $40$, while the corresponding nonpreconditioned iteration counts remain close to $190$. Similar behaviour can be observed for all considered choices of the fractional orders.

In addition, for each fixed value of $m$, the number of iterations required by the preconditioned system grows moderately as the spatial discretization parameters $n_1$ and $n_2$ increase. In contrast, the nonpreconditioned iteration counts increase much more rapidly. This indicates that the preconditioner successfully mitigates the deterioration of the spectral properties associated with mesh refinement, leading to a substantially improved convergence behaviour.

The CPU timings generally reflect the reduction in iteration counts achieved by the preconditioner. Although the gain in computational time is less pronounced in some cases due to the additional cost associated with applying the preconditioner, the preconditioned framework remains competitive and considerably more scalable than the nonpreconditioned one as the problem dimensions increase.

Overall, the numerical experiments demonstrate that the proposed preconditioner provides a substantial acceleration of the GMRES method and maintains a robust performance across different discretization levels and fractional-order parameters.

\begin{table}[ht]
	\centering
	\caption{Iteration counts and CPU times required by the GMRES method for solving the linear system, with and without preconditioning, for different values of the fractional orders.}
	\label{table:gmres}
	\begin{tabular}{cccccccccccccc}
		\toprule
		& & \multicolumn{4}{c}{$\xi=0.2$} & \multicolumn{4}{c}{$\xi=0.5$} & \multicolumn{4}{c}{$\xi=0.8$}
		\\
		& & \multicolumn{4}{c}{$\eta_{1}=0.2,\;\;\eta_{2}=0.8$} & \multicolumn{4}{c}{$\eta_{1}=0.5,\;\;\eta_{2}=0.5$} & \multicolumn{4}{c}{$\eta_{1}=0.8,\;\;\eta_{2}=0.2$}
		\\
		\cmidrule(lr){3-6}\cmidrule(lr){7-10}\cmidrule(lr){11-14}
		& & \multicolumn{2}{c}{Iter} & \multicolumn{2}{c}{CPU} & \multicolumn{2}{c}{Iter} & \multicolumn{2}{c}{CPU} & \multicolumn{2}{c}{Iter} & \multicolumn{2}{c}{CPU}
		\\
		\cmidrule(lr){3-4}\cmidrule(lr){5-6}\cmidrule(lr){7-8}\cmidrule(lr){9-10}\cmidrule(lr){11-12}\cmidrule(lr){13-14}
		$n_{1}$ = $n_{2}$ & $m$ & - & $P_{\widetilde{\mathbf{N}}}$ & - & $P_{\widetilde{\mathbf{N}}}$ & - & $P_{\widetilde{\mathbf{N}}}$ & - & $P_{\widetilde{\mathbf{N}}}$ & - & $P_{\widetilde{\mathbf{N}}}$ & - & $P_{\widetilde{\mathbf{N}}}$
		\\
		\midrule
		$2^{2}$ & $2^{2}$ &32  & 8 & 0.02 &0.01  & 29 & 8 & 0.01 & 0.01 & 30 & 8 & 0.00 &0.00  \\
		$2^{2}$ & $2^{3}$ &34  & 8 &0.01  &0.00  & 33 & 8 & 0.01 & 0.01 &35  &8  & 0.00 &0.00  \\
		$2^{2}$ & $2^{4}$ & 34 & 8 & 0.01 & 0.00 &38  & 8 & 0.01 & 0.00 &47  & 8 &0.01  &0.01  \\
		$2^{2}$ & $2^{5}$ & 34 & 8 & 0.01 & 0.00 &42  & 8 & 0.01 & 0.01 &60 &8  & 0.01 &0.01  \\
		\midrule
		$2^{3}$ & $2^{2}$ & 59 & 17 & 0.01 &0.00  & 48 & 15 & 0.01 & 0.00 & 49 &16  &0.00  & 0.00 \\
		$2^{3}$ & $2^{3}$ & 57 & 16 & 0.01 & 0.01 & 50 & 16 & 0.01 & 0.01 &55  & 16 & 0.01 &0.00  \\
		$2^{3}$ & $2^{4}$ &  61& 16 & 0.01 & 0.01 & 54 & 16 & 0.01 & 0.01 &66 &15  &0.02  &0.01  \\
		$2^{3}$ & $2^{5}$ & 58 & 17 & 0.03 & 0.01 & 59 & 16 & 0.02 & 0.02 &  82& 16 &0.03  & 0.01 \\
		\midrule
		$2^{4}$ & $2^{2}$ & 98 & 27 & 0.04 & 0.02 &75  & 28 & 0.04 & 0.02 & 78 &27  &0.03  &0.01  \\
		$2^{4}$ & $2^{3}$ & 99 & 27 & 0.06 & 0.03 & 74 & 28 & 0.04 & 0.02 & 83 &27  &  0.04& 0.03 \\
		$2^{4}$ & $2^{4}$ & 97 & 28 & 0.12 & 0.04 & 79 & 28 & 0.09 & 0.04 &  92& 27 &0.11  &0.04  \\
		$2^{4}$ & $2^{5}$ & 98 & 27 & 0.22 & 0.08 & 84 & 28 & 0.16 & 0.07 &  113& 27 & 0.27 &0.08  \\
		\midrule
		$2^{5}$ & $2^{2}$ & 189 & 39 & 1.71 & 0.69 & 115 & 43 & 0.96 & 0.83 & 146 &38  & 1.18 &0.63  \\
		$2^{5}$ & $2^{3}$ & 189 & 40 & 2.00 & 0.82 & 114 & 42 & 1.02 & 0.83 &  138& 38 &1.58  & 1.63 \\
		$2^{5}$ & $2^{4}$ & 185 & 39 & 2.87 & 1.16 &115  & 41 & 1.40 & 1.13 & 144 &37  &2.04  &2.11  \\
		$2^{5}$ & $2^{5}$ &186  & 39 & 3.14 &1.10  & 116 & 40 & 2.13 & 1.15 & 149 &36  &3.10  &3.09  \\
		\bottomrule
	\end{tabular}
\end{table}

\subsection*{Study of the computational costs and further proposals}

In this subsection, we study in detail the computational cost of our preconditioned GMRES method and we discuss further potential proposals regarding the preconditioning.

When applying one step of the preconditioned GMRES with preconditioner \( P_{\widetilde{\mathbf{N}}} \), we have to solve a linear system with coefficient matrix \( P_{\widetilde{\mathbf{N}}} \) and we have to perform matrix-vector products with the original matrix \( A_{\widetilde{\mathbf{N}}} \). The related arithmetic costs are the following
\begin{itemize}
\item When performing one matrix-vector multiplication with matrix \( A_{\widetilde{\mathbf{N}}} \), we perform $\frac{m^2}{2} +O(m)$ products of a constant times a vector of size
$\nu(\mathbf{n})$, $\frac{m^2}{2} +O(m)$ addition between vectors of size $\nu(\mathbf{n})$, and $m+1$ products of the matrix $D_{\mathbf{n}}(a)B_{\mathbf{n}}$ times a vector: hence the corresponding arithmetic cost is equal to $m^2 \nu(\mathbf{n}) + C m \nu(\mathbf{n})\log [\nu(\mathbf{n})] + o(m^2 \nu(\mathbf{n}))=m^2 \nu(\mathbf{n}) + o(m^2 \nu(\mathbf{n}))$
arithmetic operations, if we make the natural assumption that $\log [\nu(\mathbf{n})] =o(m)$ and taking into account that $C$ is a constant;
\item When solving one linear system with coefficient matrix \( P_{\widetilde{\mathbf{N}}} \), we perform $\frac{m^2}{2} +O(m)$ products of a constant times a vector of size
$\nu(\mathbf{n})$, $\frac{m^2}{2} +O(m)$ addition between vectors of size $\nu(\mathbf{n})$, and $m+1$ solutions of linear systems with coefficient matrix $M_{\nu(\mathbf{n})}:=c I + D_{\mathbf{n}}(a)B_{\mathbf{n}}$ times a vector: hence the corresponding arithmetic cost is equal to $m^2 \nu(\mathbf{n}) + C' m \nu(\mathbf{n})\log [\nu(\mathbf{n})] + o(m^2 \nu(\mathbf{n}))=m^2 \nu(\mathbf{n}) + o(m^2 \nu(\mathbf{n}))$ arithmetic operations, if we make the natural assumption that $\log [\nu(\mathbf{n})] =o(m)$ and $C'$ depends linearly on the number of iterations we need preconditioning with $\widetilde P_{\nu(\mathbf{n})}:=c I + \hat a B_{\mathbf{n}}$ or with $\widetilde S_{\nu(\mathbf{n})}:=c I + \hat a S(B_{\mathbf{n}})$. As it can be seen from Table \ref{table:gmres2}, Table \ref{table:gmres3}, Table \ref{table:gmres4}, Table \ref{table:gmres5}, the number of iterations for both preconditioned GMRES is bounded by a constant independent of the matrix size $\nu(\mathbf{n})$ so that also $C'$ can be assumed to be constant.
\end{itemize}

We recall that $c I_{\mathbf{n}} + \hat a B_{\mathbf{n}}$ which is of Toeplitz lower triangular form and we have specialized techniques for lower triangular Toeplitz matrices of cost
$O(\nu(\mathbf{n})\log [\nu(\mathbf{n})])$ and this remains true for the circulant or $\omega$-circulant counterpart; see \cite{Bini,Diaz} and references therein. \ \\

\begin{table}[ht]
	\centering
	\caption{Iteration counts and CPU times of the GMRES method for solving the linear system with coefficient matrix \( \widetilde{M}_{\nu(\mathbf{n})} \) and $c=\gamma_0$, without preconditioner and with preconditioner \( \widetilde{P}_{\nu(\mathbf{n})} \), for different fractional orders and for \( a(x,y)=e^{x+y} \), with \( m=3 \).}
	\label{table:gmres2}
	\begin{tabular}{ccccccccccccc}
		\toprule
		&
		\multicolumn{4}{c}{$\xi=0.2$} &
		\multicolumn{4}{c}{$\xi=0.5$} &
		\multicolumn{4}{c}{$\xi=0.8$}
		\\
		&
		\multicolumn{4}{c}{$\eta_{1}=0.2,\;\eta_{2}=0.8$} &
		\multicolumn{4}{c}{$\eta_{1}=0.5,\;\eta_{2}=0.5$} &
		\multicolumn{4}{c}{$\eta_{1}=0.8,\;\eta_{2}=0.2$}
		\\
		\cmidrule(lr){2-5}\cmidrule(lr){6-9}\cmidrule(lr){10-13}
		
		&
		\multicolumn{2}{c}{Iter} &
		\multicolumn{2}{c}{CPU} &
		\multicolumn{2}{c}{Iter} &
		\multicolumn{2}{c}{CPU} &
		\multicolumn{2}{c}{Iter} &
		\multicolumn{2}{c}{CPU}
		\\
		\cmidrule(lr){2-3}\cmidrule(lr){4-5}
		\cmidrule(lr){6-7}\cmidrule(lr){8-9}
		\cmidrule(lr){10-11}\cmidrule(lr){12-13}
		
		$n_{1}=n_{2}$ &
		- & $\widetilde P_{\nu(\mathbf{n})}$ &
		- & $\widetilde P_{\nu(\mathbf{n})}$ &
		- & $\widetilde P_{\nu(\mathbf{n})}$ &
		- & $\widetilde P_{\nu(\mathbf{n})}$ &
		- & $\widetilde P_{\nu(\mathbf{n})}$ &
		- & $\widetilde P_{\nu(\mathbf{n})}$
		\\
		\midrule
		
		$2^{4}$ & 31 & 22 & 0.00 & 0.00 & 27 & 22 & 0.02 & 0.01 & 31 & 22 & 0.01 & 0.01 \\
		$2^{5}$ & 61 & 25 & 0.06 & 0.07 & 38 & 25 & 0.04 & 0.09 & 59 & 25 & 0.05 & 0.08 \\
		$2^{6}$ & 116 & 26 & 1.67 & 1.28 & 56 & 26 & 0.56 & 1.17 & 113 & 26 & 1.27 & 1.17 \\
		$2^{7}$ & 214 & 27 & 46.36 & 21.34 & 83 & 27 & 20.54 & 21.03 & 207 & 27 & 35.68 & 18.57 \\
		\bottomrule
	\end{tabular}
\end{table}
\begin{table}[ht]
	\centering
	\caption{Iteration counts and CPU times of the GMRES method for solving the linear system with coefficient matrix \( \widetilde{M}_{\nu(\mathbf{n})} \) and $c=\gamma_0$, without preconditioner and with preconditioner \( \widetilde{P}_{\nu(\mathbf{n})} \), for different fractional orders and for \( a(x,y)=1+{x+y} \), with \( m=3 \).}
	\label{table:gmres3}
	\begin{tabular}{ccccccccccccc}
		\toprule
		&
		\multicolumn{4}{c}{$\xi=0.2$} &
		\multicolumn{4}{c}{$\xi=0.5$} &
		\multicolumn{4}{c}{$\xi=0.8$}
		\\
		&
		\multicolumn{4}{c}{$\eta_{1}=0.2,\;\eta_{2}=0.8$} &
		\multicolumn{4}{c}{$\eta_{1}=0.5,\;\eta_{2}=0.5$} &
		\multicolumn{4}{c}{$\eta_{1}=0.8,\;\eta_{2}=0.2$}
		\\
		\cmidrule(lr){2-5}\cmidrule(lr){6-9}\cmidrule(lr){10-13}
		
		&
		\multicolumn{2}{c}{Iter} &
		\multicolumn{2}{c}{CPU} &
		\multicolumn{2}{c}{Iter} &
		\multicolumn{2}{c}{CPU} &
		\multicolumn{2}{c}{Iter} &
		\multicolumn{2}{c}{CPU}
		\\
		\cmidrule(lr){2-3}\cmidrule(lr){4-5}
		\cmidrule(lr){6-7}\cmidrule(lr){8-9}
		\cmidrule(lr){10-11}\cmidrule(lr){12-13}
		
		$n_{1}=n_{2}$ &
		- & $\widetilde P_{\nu(\mathbf{n})}$ &
		- & $\widetilde P_{\nu(\mathbf{n})}$ &
		- & $\widetilde P_{\nu(\mathbf{n})}$ &
		- & $\widetilde P_{\nu(\mathbf{n})}$ &
		- & $\widetilde P_{\nu(\mathbf{n})}$ &
		- & $\widetilde P_{\nu(\mathbf{n})}$
		\\
		\midrule
		
		$2^{4}$ & 29 & 15 & 0.00 & 0.00 & 24 & 14 & 0.00 & 0.00 & 28 & 14 & 0.00 & 0.00 \\
		$2^{5}$ & 50 & 15 & 0.02 & 0.02 & 32 & 15 & 0.01 & 0.03 & 49 & 15 & 0.02 & 0.02 \\
		$2^{6}$ & 88 & 16 & 0.59 & 0.41 & 44 & 16 & 0.30 & 0.41 & 86 & 15 & 0.59 & 0.37 \\
		$2^{7}$ & 156 & 19 & 14.98 & 7.02 & 61 & 16 & 6.87 & 7.18 & 153 & 16 & 18.62 & 7.41 \\
		\bottomrule
	\end{tabular}
\end{table}

\begin{table}[ht]
	\centering
	\caption{Iteration counts and CPU times of the GMRES method for solving the linear system with coefficient matrix \( \widetilde{M}_{\nu(\mathbf{n})} \) and $c=\gamma_0$, without preconditioner and with preconditioner \( \widetilde{S}_{\nu(\mathbf{n})} \), for different fractional orders and for \( a(x,y)=e^{x+y} \), with \( m=3 \).}
	\label{table:gmres4}
	\begin{tabular}{ccccccccccccc}
		\toprule
		&
		\multicolumn{4}{c}{$\xi=0.2$} &
		\multicolumn{4}{c}{$\xi=0.5$} &
		\multicolumn{4}{c}{$\xi=0.8$}
		\\
		&
		\multicolumn{4}{c}{$\eta_{1}=0.2,\;\eta_{2}=0.8$} &
		\multicolumn{4}{c}{$\eta_{1}=0.5,\;\eta_{2}=0.5$} &
		\multicolumn{4}{c}{$\eta_{1}=0.8,\;\eta_{2}=0.2$}
		\\
		\cmidrule(lr){2-5}\cmidrule(lr){6-9}\cmidrule(lr){10-13}
		
		&
		\multicolumn{2}{c}{Iter} &
		\multicolumn{2}{c}{CPU} &
		\multicolumn{2}{c}{Iter} &
		\multicolumn{2}{c}{CPU} &
		\multicolumn{2}{c}{Iter} &
		\multicolumn{2}{c}{CPU}
		\\
		\cmidrule(lr){2-3}\cmidrule(lr){4-5}
		\cmidrule(lr){6-7}\cmidrule(lr){8-9}
		\cmidrule(lr){10-11}\cmidrule(lr){12-13}
		
		$n_{1}=n_{2}$ &
		- & $\widetilde S_{\nu(\mathbf{n})}$ &
		- & $\widetilde S_{\nu(\mathbf{n})}$ &
		- & $\widetilde S_{\nu(\mathbf{n})}$ &
		- & $\widetilde S_{\nu(\mathbf{n})}$ &
		- & $\widetilde S_{\nu(\mathbf{n})}$ &
		- & $\widetilde S_{\nu(\mathbf{n})}$
		\\
		\midrule
		
		$2^{4}$ & 31 & 25 & 0.00 & 0.01 & 27 & 21 & 0.02 & 0.02 & 31 & 24 & 0.01 & 0.01 \\
		$2^{5}$ & 61 & 27 & 0.06 & 0.03 & 38 & 24 & 0.04 & 0.03 & 59 & 26 & 0.05 & 0.03 \\
		$2^{6}$ & 116 & 29 & 1.67 & 0.42 & 56 & 25 & 0.56 & 0.37 & 113 & 27 & 1.27 & 0.32 \\
		$2^{7}$ & 214 & 29 & 46.36 & 6.64 & 83 & 25 & 20.56 & 4.78 & 207 & 27 & 35.68 & 4.69 \\
		\bottomrule
	\end{tabular}
\end{table}

\begin{table}[ht]
	\centering
	\caption{Iteration counts and CPU times of the GMRES method for solving the linear system with coefficient matrix \( \widetilde{M}_{\nu(\mathbf{n})} \) and $c=\gamma_0$, without preconditioner and with preconditioner \( \widetilde{S}_{\nu(\mathbf{n})} \), for different fractional orders and for \( a(x,y)=1+{x+y} \), with \( m=3 \).}
	\label{table:gmres5}
	\begin{tabular}{ccccccccccccc}
		\toprule
		&
		\multicolumn{4}{c}{$\xi=0.2$} &
		\multicolumn{4}{c}{$\xi=0.5$} &
		\multicolumn{4}{c}{$\xi=0.8$}
		\\
		&
		\multicolumn{4}{c}{$\eta_{1}=0.2,\;\eta_{2}=0.8$} &
		\multicolumn{4}{c}{$\eta_{1}=0.5,\;\eta_{2}=0.5$} &
		\multicolumn{4}{c}{$\eta_{1}=0.8,\;\eta_{2}=0.2$}
		\\
		\cmidrule(lr){2-5}\cmidrule(lr){6-9}\cmidrule(lr){10-13}
		
		&
		\multicolumn{2}{c}{Iter} &
		\multicolumn{2}{c}{CPU} &
		\multicolumn{2}{c}{Iter} &
		\multicolumn{2}{c}{CPU} &
		\multicolumn{2}{c}{Iter} &
		\multicolumn{2}{c}{CPU}
		\\
		\cmidrule(lr){2-3}\cmidrule(lr){4-5}
		\cmidrule(lr){6-7}\cmidrule(lr){8-9}
		\cmidrule(lr){10-11}\cmidrule(lr){12-13}
		
		$n_{1}=n_{2}$ &
		- & $\widetilde S_{\nu(\mathbf{n})}$ &
		- & $\widetilde S_{\nu(\mathbf{n})}$ &
		- & $\widetilde S_{\nu(\mathbf{n})}$ &
		- & $\widetilde S_{\nu(\mathbf{n})}$ &
		- & $\widetilde S_{\nu(\mathbf{n})}$ &
		- & $\widetilde S_{\nu(\mathbf{n})}$
		\\
		\midrule
		
		$2^{4}$ & 29 & 17 & 0.00 & 0.00 & 24 & 13 & 0.00 & 0.00 & 28 & 16 & 0.00 & 0.00 \\
		$2^{5}$ & 50 & 18 & 0.02 & 0.01 & 32 & 14 & 0.01 & 0.01 & 49 & 17 & 0.02 & 0.01 \\
		$2^{6}$ & 88 & 18 & 0.59 & 0.14 & 44 & 14 & 0.30 & 0.10 & 86 & 17 & 0.59 & 0.12 \\
		$2^{7}$ & 156 & 18 & 14.98 & 1.73 & 61 & 15 & 6.87 & 1.68 & 153 & 17 & 18.62 & 2.49 \\
		\bottomrule
	\end{tabular}
\end{table}
\FloatBarrier

Finally, we introduce an alternative Strang-like preconditioner \( S_{\widetilde{\mathbf{N}}} \) for the linear system \eqref{eq:lin-syst}.
\smallskip

\noindent A Strang-like preconditioner for our block structure is defined by replacing each lower triangular Toeplitz matrix in the various blocks with its natural circulant approximation and by replacing coefficient $a(x,y)$ with its integral average $\hat a$ so that $D_{\mathbf{n}}(a)$ is replaced by
$\hat aI_{\mathbf{n}}$ and $G_{\mathbf{n}}$ reduces to $$G_{\mathbf{n}} = \hat a B_{\mathbf{n}}, \qquad B_{\mathbf{n}}=\left(B_{\mathbf{n}}^{x}+B_{\mathbf{n}}^{y}\right),$$
that is
\begin{equation}\label{eq:strang-precond}
	\renewcommand{\arraystretch}{1.5}
	S_{\widetilde{\mathbf{N}}} := \left[ \begin{array}{c|c}
		S(U_m) \otimes I_{\mathbf{n}} + I_m\otimes a S(B_{\mathbf{n}}) & \mathrm{O}_{\mathbf{n}m \times \mathbf{n}} \\
		\hline
		\mathbf{e}_m^T \otimes I_\mathbf{n} & \frac{\lambda}{\alpha_{m}} D_{\mathbf{n}}(a) S(B_{\mathbf{n}})
	\end{array} \right].
\end{equation}
Since circulant matrices are diagonalizable by the Fourier matrices, we write
\begin{equation*}
	S(U_m) = F_m \Lambda_m^{(1)} F_m^H,
	\qquad
	S(B_{\mathbf{n}}) = F_{\mathbf{n}} \Lambda_{\mathbf{n}}^{(2)} F_{\mathbf{n}}^H,
\end{equation*}
where $\Lambda_m^{(1)}$ and $\Lambda_\mathbf{n}^{(2)}$ are the diagonal matrices containing the eigenvalues and can be computed starting from the coefficients of $U_m$, $B_{\mathbf{n}}$ via the Fast Fourier Transform (FFT).
\smallskip

Define the unitary transformation

\begin{equation*}
	\renewcommand{\arraystretch}{1.5}
Q := \left[ \begin{array}{c|c}
		F_m\otimes F_{\mathbf{n}} & \mathrm{O}_{{\mathbf{n}}m \times {\mathbf{n}}} \\
		\hline
		\mathrm{O}_{{\mathbf{n}}\times{\mathbf{n}}m} &  F_{\mathbf{n}}
	\end{array} \right],
\end{equation*}
Then
\begin{equation}\label{eq:p11p}
	\renewcommand{\arraystretch}{1.5}
	Q^{H} S_{\widetilde{\mathbf{N}}} Q := \left[ \begin{array}{c|c}
		\Lambda_m^{(1)}\otimes I_{\mathbf{n}} + I_m\otimes  a \Lambda_\mathbf{n}^{(2)}  & \mathrm{O}_{{\mathbf{n}}m \times {\mathbf{n}}} \\
		\hline
		\mathbf{e}_m^T F_m^H \otimes I_\mathbf{n} & \frac{\lambda}{\alpha_m} a \Lambda_{\mathbf{n}}^{(2)}
	\end{array} \right].
\end{equation}

The preconditioner $S_{\widetilde{\mathbf{N}}}$ is cheaper that that $P_{\widetilde{\mathbf{N}}}$, but it cannot ensure the clustering at $1$ due to GLT arguments since
\[
\{S_{\widetilde{\mathbf{N}}}^{-1}A_{\widetilde{\mathbf{N}}}\}_{\widetilde{\mathbf{N}}} \sim_{\glt,\sigma} \frac{h_\xi(\theta_1) + a(x,y) \left(\nu_{1}g_{\eta_{1}}(\theta_{2})+\nu_{2}g_{\eta_{2}}(\theta_{3})\right)}{h_\xi(\theta_1) + \hat a \left(\nu_{1}g_{\eta_{1}}(\theta_{2})+\nu_{2}g_{\eta_{2}}(\theta_{3})\right)}
\]
and hence it is less effective that $P_{\widetilde{\mathbf{N}}}$ in terms of iteration count, also becausethe asymptotic behavior of the eigenvalues is not favourable. A more refined analysis is left for future investigations as indicated in Section \ref{sec:end}.

\section{Conclusions} \label{sec:end}
We investigated the inverse source problem for a two-dimensional space-time FDE with variable coefficients. The ill-posedness was effectively handled through the quasi-boundary value method, ensuring stable reconstruction in the presence of noisy final-time data. A finite difference discretization led to a large-scale all-at-once system with multilevel Toeplitz-like structure. Using GLT theory, we derived the symbol of the matrix sequence and characterized its spectral distribution. This analysis provided the foundation for constructing efficient preconditioners. We introduced a block triangular preconditioner and proved that the corresponding preconditioned matrix sequences are spectrally distributed as one in the GLT, singular value and eigenvalue sense. As expected for multilevel problems \cite{MultiNo}, the clustering is not strong and hence the GMRES iteration count mildly increases with the problem size. Numerical results confirmed the theory: in fact the proposed preconditing reduced iteration counts and computational cost, while maintaining robustness across different fractional orders. The convergence behavior depends mainly on the spatial discretization, with limited sensitivity to temporal refinement. The proposed framework combines regularization, structure-preserving discretization, and GLT-based spectral analysis. It provides an efficient and scalable approach for inverse problems in fractional diffusion models.

Future work will address extensions to more complex geometries, finite elements of high order or isogeometric analysis with intermediate regulaity, higher dimensions, and adaptive parameter selection, where a program for the first two items has been sketched in Remark \ref{remark:future directions}, while taking inspiration from the variable-coeffient approach in \cite{Serra1999} and from the $\omega$-circulant approach \cite{Diaz}, other options for the choice of the preconditioners should be considered starting from the brief discussion at the end of Section \ref{sec:num}.

\section*{Acknowledgments}

The research of Stefano Serra-Capizzano is supported by the Italian National Agency INdAM-GNCS, by the PRIN-PNRR project \lq\lq MATH-ematical tools for predictive maintenance and PROtection of CULTtural heritage (MATHPROCULT)'' (code P20228HZWR, CUP J53D23003780006), and by the INdAM-GNCS Project \lq\lq Metodi strutturati per il signal processing avanzato'' (CUP E53C25002010001). Furthermore Stefano Serra-Capizzano is grateful for the support of the Laboratory of Theory, Economics and Systems – Department of Computer Science at Athens University of Economics and Business.

\end{document}